\newtheorem{theorem}{Theorem}[section]
\newtheorem{proposition}[theorem]{Proposition}
\newtheorem{corollary}[theorem]{Corollary}
\theoremstyle{definition}
\newtheorem{definition}[theorem]{Definition}
\newtheorem{example}[theorem]{Example}
\theoremstyle{remark}
\newtheorem{remark}[theorem]{Remark}
\numberwithin{equation}{section}
\begin{document}

\setcounter{page}{1}

\title[FPT on $\varepsilon$-chainable spaces]{Fixed point theorems for $(\varepsilon,\lambda)$-uniformly locally contractive mapping defined on
$\varepsilon$-chainable $G$-metric type spaces.}

\author[Ya\'e Olatoundji Gaba]{Ya\'e Olatoundji Gaba$^{1,*}$}

\address{$^{1}$Department of Mathematics and Applied Mathematics, University of Cape Town, South Africa.}
\email{\textcolor[rgb]{0.00,0.00,0.84}{gabayae2@gmail.com
}}

\subjclass[2010]{Primary 47H05; Secondary 47H09, 47H10.}

\keywords{$G$-metric, fixed point, $\lambda$-sequence.}

\begin{abstract}
In this article, we discuss fixed point results for $(\varepsilon,\lambda)$-uniformly locally contractive self mapping defined on $\varepsilon$-chainable $G$-metric type spaces. In particular, we show that under some more general conditions, certain fixed point results already obtained in the literature remain true. Moreover, in the last sections of this paper, we make use of the newly introduced notion of $\lambda$-sequences to derive new results.
\end{abstract} 

\maketitle

\section{Introduction and preliminaries}
Fixed-point theory is an important and flourishing
area of research of pure and applied mathematics. Its relevance is due to the fact that in many real life problems, it is a key mathematical tool used to establish the existence of solutions. Although the basic ideas
for fixed-point theory came from metric space topology, the last decades have seen a rapid growth of the theory in metric-type spaces, see \cite{Gaba3,Gaba2} where concepts like startpoint, endpoint were introduced, as ``fixed-point like" theory.  We also know from Mustafa \cite[Proposition 5]{Mustafa} that every $G$-metric space is topologically equivalent to a metric space but $G$-metric spaces and metric spaces are ``isometrically" distinct.

\vspace*{0.2cm}

Many fixed point in $G$-metric type spaces appear in the literature and the works by Jleli\cite{j}, Kadelburg\cite{r}, Mohanta\cite{s}, Mustafa et al. (\cite{mustafa1, mustafa3, mustafa4,mustafa2}), Patil\cite{p}, Tran Van An\cite{t} and many more, are very enlightening on the subject.
In \cite{Gaba1},  we began the study of fixed point for certain maps defined on $G$-metric type spaces. Our purpose in the present paper is to pursue this study by providing new fixed point results. 
We make use of the idea of orbitally complete and $\varepsilon$-chainable $G$-metric type spaces as well as the concept of $(\varepsilon,\lambda)$-uniformly locally contractive mapping that we introduce in this paper. We also show how the idea of $\lambda$-sequence can be used to prove some of these results.
The method builds on the convergence of an appropriate series of coefficients. Recent and similar work
can also be read in \cite{Gaba5,v}.

\vspace*{0.2cm}

We recall here some key results that will be useful in the rest of this manuscript.
The basic concepts and notations attached to the idea of $G$-metric type spaces are merely copies of those introduced for $G$-metric spaces and can be read extensively in \cite{Mustafa} but for the convenience of the reader, we here recall the most important ones.

\newpage

In \cite[Definition 6]{kham}, Khamsi and Hussain introduced the so-called metric-type
space $(X,m,\alpha)$, where the classical trianlge inequality condition is replaced by

$$m(x,y) \leq \alpha [ m(x,z_1)+m(z_1,z_2)+\cdots+m(z_n,y) ]$$ for any points for any points $x,y,z,z_i\in X,\ i=1,2,\ldots, n$ where $n\geq 1$ and some non-negative constant $\alpha\geq 0$.

Imitating this, we introduced in \cite{Gaba1} the definition below:

\begin{definition} (Compare \cite[Definition 3]{Mustafa})
Let $X$ be a nonempty set, and let the function $G:X\times X\times X \to [0,\infty)$ satisfy the following properties:
\begin{itemize}
\item[(G1)] $G(x,y,z)=0$ if $x=y=z$ whenever $x,y,z\in X$;
\item[(G2)] $G(x,x,y)>0$ whenever $x,y\in X$ with $x\neq y$;
\item[(G3)] $G(x,x,y)\leq G(x,y,z) $ whenever $x,y,z\in X$ with $z\neq y$;
\item[(G4)] $G(x,y,z)= G(x,z,y)=G(y,z,x)=\ldots$, (symmetry in all three variables);
\item[(G5)]
$$G(x,y,z) \leq K [G(x,z_1,z_1)+G(z_1,z_2,z_2)+\cdots+G(z_n,y,z)]$$ for any points $x,y,z,z_i\in X,\ i=1,2,\ldots, n$ where $n\geq 1$.
\end{itemize}
The triplet $(X,G,K)$ is called a \textbf{$G$-metric type space}.

\end{definition}

\begin{remark}(Compare \cite{Gaba1})
We can easily observe that $G$-metric type spaces generalize $G$-metric spaces and that for $K=1$, we recover the classical $G$-metric. Furthermore,  if $(X,G,K)$ is a $G$-metric type space, then for any $L \geq K$, $(X,G,L)$ is also a $G$-metric type space. 
\end{remark}

Straightforward computations lead to the following.

\begin{proposition} (Compare \cite[Proposition 6]{Mustafa})
Let $(X,G,K)$ be a $G$-metric type space. Define on $X$ the metric type $d_G$ by $d_G(x,y)= G(x,y,y)+G(x,x,y)$ whenever $x,y \in X$. Then for a sequence $(x_n) \subseteq X$, the following are equivalent
\begin{itemize}
\item[(i)] $(x_n)$ is $G$-convergent to $x\in X.$

\item[(ii)] $\lim_{n,m \to \infty}G(x,x_n,x_m)=0.$

\item[(iii)]  $\lim_{n \to \infty}d_G(x_n,x)=0.$

\item[(iv)]$\lim_{n \to \infty}G(x,x_n,x_n)=0.$ 

\item[(v)]$\lim_{n \to \infty}G(x_n,x,x)=0.$ 
\end{itemize}

\end{proposition}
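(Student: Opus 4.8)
The plan is to reduce every assertion to the single nonnegative real-valued quantity $d_G(x_n,x)$ and to exploit the symmetry axiom (G4) together with the constant-$K$ triangle inequality (G5). The first thing I would record is that, by (G4), $G(x_n,x_n,x)=G(x,x_n,x_n)$, so that
$$d_G(x_n,x)=G(x_n,x,x)+G(x,x_n,x_n),$$
i.e. $d_G(x_n,x)$ is exactly the sum of the two quantities appearing in (v) and (iv). Since both summands are nonnegative, the sum tends to $0$ if and only if each does; hence (iii) is equivalent to the conjunction of (iv) and (v), and in particular (iii)$\Rightarrow$(iv) and (iii)$\Rightarrow$(v) are immediate.

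The crux is then to show that (iv) and (v) are individually equivalent to (iii), for which it suffices to prove the two comparison estimates $G(x,x_n,x_n)\le 2K\,G(x_n,x,x)$ and $G(x_n,x,x)\le 2K\,G(x,x_n,x_n)$. Both follow from the single inequality $G(a,b,b)\le 2K\,G(b,a,a)$, valid for all $a,b\in X$, which I would derive as follows: writing $G(a,b,b)=G(b,b,a)$ by (G4) and applying (G5) with the one intermediate point $a$ gives $G(b,b,a)\le K\big[G(b,a,a)+G(a,b,a)\big]$; since $G(a,b,a)=G(b,a,a)$ again by (G4), this yields $G(a,b,b)\le 2K\,G(b,a,a)$. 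Specialising $(a,b)=(x,x_n)$ and $(a,b)=(x_n,x)$ gives the two estimates, so that $G(x,x_n,x_n)\to 0$ and $G(x_n,x,x)\to 0$ are equivalent and each forces $d_G(x_n,x)\to 0$. This closes the loop (iii)$\Leftrightarrow$(iv)$\Leftrightarrow$(v).

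Next I would bring in (ii). The implication (ii)$\Rightarrow$(iv) is obtained simply by taking $m=n$ in the double limit. For the converse I would use (G5) twice: applying it with intermediate point $x_n$ gives $G(x,x_n,x_m)\le K\big[G(x,x_n,x_n)+G(x_n,x_n,x_m)\big]$, and the first term tends to $0$ by (iv). For the second term I would write $G(x_n,x_n,x_m)=G(x_m,x_n,x_n)$ by (G4) and apply (G5) with intermediate point $x$, obtaining $G(x_m,x_n,x_n)\le K\big[G(x_m,x,x)+G(x,x_n,x_n)\big]$, whose summands tend to $0$ by (v) and (iv). Combining the two estimates shows $G(x,x_n,x_m)\to 0$, i.e. (ii). Finally, (i) is the definition of $G$-convergence, which unwinds to the statement that $(x_n)$ is eventually inside every $G$-ball $B_G(x,r)=\{y:G(x,y,y)<r\}$ centred at $x$; this is precisely the assertion $G(x,x_n,x_n)\to 0$, so (i)$\Leftrightarrow$(iv), completing the equivalences.

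The main obstacle I anticipate is not any single step but the disciplined bookkeeping of the permutation identities coming from (G4) combined with the correct placement of intermediate points in (G5), so that the constant $K$ (rather than a term one is trying to bound) lands on the right-hand side. The one genuinely delicate point is the (iv)$\Rightarrow$(ii) argument: a naive choice of intermediate point reproduces $G(x,x_n,x_m)$ on both sides and must be avoided by routing the estimate through the fixed reference point $x$ as above.
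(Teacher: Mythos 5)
Your proof is correct, and it is worth noting that the paper itself offers no argument here: the proposition is prefaced only by ``Straightforward computations lead to the following,'' deferring to Mustafa--Sims' Proposition~6 for the case $K=1$. Your write-up supplies exactly the omitted computations, and the key steps check out against the axioms as stated: the identity $d_G(x_n,x)=G(x_n,x,x)+G(x,x_n,x_n)$ follows from (G4); your comparison lemma $G(a,b,b)\le 2K\,G(b,a,a)$ is the correct $K$-weighted analogue of Mustafa's symmetry inequality $G(x,y,y)\le 2G(y,x,x)$ and is properly derived from (G5) with $n=1$ and intermediate point $z_1=a$; and your two-step estimate for (iv)$\Rightarrow$(ii), routing $G(x_n,x_n,x_m)$ through the fixed point $x$ rather than through another term of the sequence, is precisely the care needed to avoid a circular bound. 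The only caveat is item (i): the paper never defines $G$-convergence (it inherits the notion from \cite{Mustafa}), so your reading of (i) as ``eventually inside every ball $B_G(x,r)=\{y:G(x,y,y)<r\}$'' is an interpretive choice rather than a consequence of anything stated in the paper; it is the standard choice, and with it your equivalence (i)$\Leftrightarrow$(iv) is immediate, but strictly speaking one should also note that in a type space (as in $b$-metric spaces) such balls need not be open, so ``convergence in the induced topology'' requires the usual neighborhood-filter reading. With that understanding, your chain (i)$\Leftrightarrow$(iv)$\Leftrightarrow$(v)$\Leftrightarrow$(iii) together with (ii)$\Leftrightarrow$(iv) covers all five statements.
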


\begin{proposition}(Compare \cite[Proposition 9]{Mustafa})

In a $G$-metric type space $(X,G,K)$, the following are equivalent
\begin{itemize}
\item[(i)] The sequence $(x_n) \subseteq X$ is $G$-Cauchy.

\item[(ii)] For each $\varepsilon >0$ there exists $N \in \mathbb{N}$ such that $G(x_n,x_m,x_m)< \varepsilon$ for all $m,n\geq N$.

\item[(iii)] $(x_n)$ is a Cauchy sequence in the metric type space $(X, d_G,K)$.
\end{itemize}

\end{proposition}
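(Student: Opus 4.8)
The plan is to establish the cycle of implications (i) $\Rightarrow$ (ii) $\Rightarrow$ (iii) $\Rightarrow$ (i), working only from the definition of a $G$-Cauchy sequence (for each $\varepsilon>0$ there is $N$ with $G(x_n,x_m,x_\ell)<\varepsilon$ for all $n,m,\ell\ge N$), the definition $d_G(x,y)=G(x,y,y)+G(x,x,y)$, and the axioms (G1)--(G5). The implication (i) $\Rightarrow$ (ii) is immediate: specialising $\ell=m$ in the $G$-Cauchy condition gives exactly (ii).

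For (ii) $\Rightarrow$ (iii), fix $\varepsilon>0$ and take $N$ as in (ii). For $n,m\ge N$ I would bound the two summands of $d_G(x_n,x_m)=G(x_n,x_m,x_m)+G(x_n,x_n,x_m)$ separately. The first is $<\varepsilon$ directly by (ii). For the second, the symmetry axiom (G4) gives $G(x_n,x_n,x_m)=G(x_m,x_n,x_n)$, which is again of the form appearing in (ii) with the indices $n$ and $m$ interchanged, hence also $<\varepsilon$ for $n,m\ge N$. Therefore $d_G(x_n,x_m)<2\varepsilon$ for $n,m\ge N$, so $(x_n)$ is a Cauchy sequence in $(X,d_G,K)$.

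For (iii) $\Rightarrow$ (i), assume $d_G(x_n,x_m)\to 0$. Since $G$ is non-negative, $G(x_n,x_m,x_m)\le d_G(x_n,x_m)$ and $G(x_n,x_n,x_m)\le d_G(x_n,x_m)$, so both tend to $0$. Given $\varepsilon>0$, choose $N$ with $d_G(x_n,x_m)<\varepsilon/(2K)$ for all $n,m\ge N$. For arbitrary $n,m,\ell\ge N$, apply (G5) with the single intermediate point $z_1=x_m$:
\[
G(x_n,x_m,x_\ell)\le K\big[\,G(x_n,x_m,x_m)+G(x_m,x_m,x_\ell)\,\big].
\]
The first term is $\le d_G(x_n,x_m)$, and by (G4) the second is $G(x_m,x_m,x_\ell)=G(x_\ell,x_m,x_m)\le d_G(x_\ell,x_m)$; both are $<\varepsilon/(2K)$. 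Hence $G(x_n,x_m,x_\ell)<\varepsilon$ for all $n,m,\ell\ge N$, i.e.\ $(x_n)$ is $G$-Cauchy.

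I do not expect a genuine obstacle here; the only points requiring care are bookkeeping ones. First, one must carry the constant $K$ coming from (G5) through the estimate in (iii) $\Rightarrow$ (i) (hence the choice of threshold $\varepsilon/(2K)$). Second, the two halves $G(x,x,y)$ and $G(x,y,y)$ of $d_G$ are not individually symmetric, so one must invoke (G4) at the right moments to pass between them and the one-sided quantity in (ii). Once the axioms are applied in this order, the argument is the $G$-metric-type analogue of the classical proof and goes through directly.
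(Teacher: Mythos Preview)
Your proof is correct. The cycle (i) $\Rightarrow$ (ii) $\Rightarrow$ (iii) $\Rightarrow$ (i) is carried out cleanly, and the two bookkeeping points you flag---carrying the constant $K$ through the application of (G5) in the step (iii) $\Rightarrow$ (i), and invoking (G4) to swap between $G(x_n,x_n,x_m)$ and $G(x_m,x_n,x_n)$---are exactly the places where care is needed.

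There is nothing to compare against, however: the paper does not supply its own proof of this proposition. It is listed among the preliminaries with the parenthetical ``Compare \cite[Proposition 9]{Mustafa}'' and is taken as known background, the $G$-metric-type analogue of the classical characterisation of Cauchy sequences. Your argument is precisely the expected adaptation of that classical proof to the present setting, so it matches what the cited reference would contain.
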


\begin{definition} (Compare \cite[Definition 9]{Mustafa})
 A $G$-metric type space $(X,G,K)$ is said to be $G$-complete if every $G$-Cauchy sequence in $(X,G,K)$ is $G$-convergent in $(X,G,K)$.

\end{definition}

\newpage

\section{First Results}
We begin with the following property.

\begin{definition}(\cite{mustafa1})
Let $(X,G,K)$ be a $G$-metric type space. A mapping $T:X\to X$ is called Lipschitzian if there exists $k \in \mathbb{R}$ such that
\begin{equation}\label{lipschitz}
G(Tx,Ty,Tz) \leq k G(x,y,z)
\end{equation}
for all $x,y,z \in X.$ The smallest constant $k$ which  satisfies the above inequality is called the
\textbf{Lipschitz constant of $T$}, and is denoted $Lip(T)$. In particular $T$ is a \textbf{contaction} if $Lip(T) \in [0,1)$.
\end{definition}

\begin{theorem}\label{main1}
Let $(X,G,K)$ be a $G$-complete $G$-metric type space and $T:X\to X$ be a mapping such that $T^n$ is Lipschitzian for all $n\geq 1$ and that $\sum_{n=0}^\infty Lip(T^n) < \infty.$  Then $T$ has a unique fixed point $x^* \in X$. In fact, $T$ is a Picard operator.

\end{theorem}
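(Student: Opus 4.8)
The plan is to mimic the classical Weissinger/generalized-contraction argument, but carried out in the $G$-metric type setting where the constant $K$ appears in the triangle inequality. Fix any $x_0 \in X$ and define the Picard iterates $x_n = T^n x_0$. First I would estimate $G(x_n, x_{n+1}, x_{n+1})$: applying the hypothesis that $T^n$ is Lipschitzian gives
\begin{equation*}
G(x_n, x_{n+1}, x_{n+1}) = G(T^n x_0, T^n x_1, T^n x_1) \leq Lip(T^n)\, G(x_0, x_1, x_1),
\end{equation*}
and similarly $G(x_{n+1}, x_n, x_n) \leq Lip(T^n)\, G(x_1, x_0, x_0)$. The point of phrasing the contraction hypothesis in terms of $\sum_n Lip(T^n) < \infty$ (rather than a single contractive constant) is that this is exactly what is needed to sum a telescoping-type bound even though $K$ may exceed $1$.

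Next I would show $(x_n)$ is $G$-Cauchy. Using (G5) with the chain $x_n, x_{n+1}, \ldots, x_m$, one gets
\begin{equation*}
G(x_n, x_m, x_m) \leq K\big[G(x_n, x_{n+1}, x_{n+1}) + G(x_{n+1}, x_{n+2}, x_{n+2}) + \cdots + G(x_{m-1}, x_m, x_m)\big] \leq K\, G(x_0,x_1,x_1) \sum_{j=n}^{m-1} Lip(T^j),
\end{equation*}
and since $\sum_n Lip(T^n)$ converges, its tails go to $0$; hence $G(x_n, x_m, x_m) \to 0$ as $n,m \to \infty$, so by Proposition 1.5 (the characterization of $G$-Cauchy sequences) $(x_n)$ is $G$-Cauchy. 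By $G$-completeness it $G$-converges to some $x^* \in X$.

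Then I would check $x^*$ is a fixed point. Since $Lip(T^n) \to 0$ (being the terms of a convergent series), in particular $Lip(T^n) < 1$ for large $n$, so $Lip(T) \le Lip(T^1)$ need not itself be $<1$, but $T$ is at least Lipschitz hence $G$-continuous; passing to the limit in $x_{n+1} = T x_n$ and using Proposition 1.4 (continuity/convergence equivalences) gives $T x^* = x^*$. For uniqueness, if $T y^* = y^*$ as well, then $y^* = T^n y^*$ and $x^* = T^n x^*$ for all $n$, so $G(x^*, y^*, y^*) = G(T^n x^*, T^n y^*, T^n y^*) \le Lip(T^n) G(x^*, y^*, y^*)$; letting $n \to \infty$ forces $G(x^*, y^*, y^*) = 0$, whence $x^* = y^*$ by (G1)–(G2). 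Finally, the Cauchy estimate above started from an arbitrary $x_0$, so every Picard sequence converges to the unique fixed point, i.e. $T$ is a Picard operator.

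The main obstacle I anticipate is purely bookkeeping with the constant $K$: because $K \geq 1$, one cannot absorb it into a geometric ratio as in the metric case, so the argument must be arranged so that $K$ multiplies a tail of the \emph{already convergent} series $\sum Lip(T^n)$ rather than appearing with a growing power. Getting the chain in (G5) set up so that exactly the tail $\sum_{j \ge n} Lip(T^j)$ (times the single constant $K\,G(x_0,x_1,x_1)$) controls $G(x_n,x_m,x_m)$ is the crux; everything else is a routine limiting argument using Propositions 1.4 and 1.5.
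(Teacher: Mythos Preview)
Your proposal is correct and follows essentially the same Weissinger-type argument as the paper. The only differences are cosmetic: the paper organizes the Cauchy estimate as $G(T^{n+h}x,T^nx,T^nx)\le K\,Lip(T^n)\bigl(\sum_{i<h}Lip(T^i)\bigr)G(Tx,x,x)$ (using $Lip(T^n)\to 0$ rather than the tail of the series) and verifies $Tx^*=x^*$ by an explicit triangle-inequality computation instead of invoking Lipschitz-continuity of $T$, but the strategy and all key ideas coincide.
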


%---------------------------------------------------------------------------------------%

\begin{proof}

Let $x\in X$. For any $n,h \geq 0, $ we have

\begin{equation}
G(T^{n+h}x,T^nx,T^nx) \leq Lip(T^n)G(T^{h}x,x,x)\leq K Lip(T^n)\sum_{i=0}^{h-1}G(T^{i+1}x, T^ix,T^ix).
\end{equation}

Hence

\begin{equation}
G(T^{n+h}x,T^nx,T^nx) \leq K Lip(T^n)\left( \sum_{i=0}^{h-1}Lip(T^i) \right)G(Tx, x,x).
\end{equation}

Since  $\sum_{n=0}^\infty Lip(T^n)< \infty,$ then $\lim_{n \to \infty}Lip(T^n)=0.$ This forces $(T^nx)$ to be a $G$-Cauchy sequence. Since $X$ is $G$-complete, then $(T^nx)$ converges to some point $x^*\in X.$

\vspace*{0.3cm}

\underline{Claim 1}: \textbf{$x^*$ is a fixed point of $T.$}
On the one hand we have

\begin{align}
G(T^{n-1}x,x^*,x^*) & \leq  K \left(  G(T^{n-1}x,T^{n}x,T^{n}x)  +  G(T^{n}x,x^*,x^*)            \right) \nonumber \\
                  & \leq  K \left[ Lip(T^{n-1})\ G(x,Tx,Tx)+ G(T^{n}x,x^*,x^*)           \right],
\end{align}

hence we get

\begin{align}
G(x^*,Tx^*,Tx^*) & \leq  K  \left[ G(x^*,T^{n}x,T^{n}x) + G(T^{n}x,Tx^*,Tx^*) \right] \nonumber \\
                & \leq  K \left[ G(x^*,T^{n}x,T^{n}x)   +K Lip(T)G(T^nx,x^*,x^*) \right.  \nonumber \\
               & +  \left.  K Lip(T) Lip(T^{n-1})G(x,Tx,Tx) \right].
\end{align}

Letting $n$ tends to $\infty$, we get $G(x^*,Tx^*,Tx^*) =0$\footnote{See \cite[Proposition 1]{Mustafa}, which allows us to have $G(x,y,z)=0 \Longleftrightarrow x=y=z$}, i.e $Tx^* =x^* $.

\vspace*{0.3cm}

\underline{Claim 2}: \textbf{$x^*$ is the only fixed point of $T.$}
If $a^*$ is a fixed point of $T$, then

\begin{equation}
G(a^*,x^*,x^*) \leq G(T^{n}a^*,T^{n}x^*,T^{n}x^*) \leq Lip(T^{n})G(a^*,x^*,x^*)
\end{equation}
for any $n\geq 1$. Since $\lim_{n \to \infty}Lip(T^n)=0$, we obtain that $a^*=x^*$.

\end{proof}

%\newpage

Instead of the property (G5), a more natural condition is what appears in \cite[Definition 3]{Mustafa}

$$(G5') \quad D(x,y,z) \leq K [D(x,z_1,z_1)+D(z_1,y,z)]$$ for any points $x,y,z,z_1\in X$ for some constant $K>0$.

%In this setting, the following result is established.

\begin{theorem}\label{main2}
Let $(X,G,K)$ be a $G$-complete $G$-metric type space where $G$ satisfies (G5') instead of (G5). Let $T:X\to X$ be a mapping such that $T^n$ is Lipschitzian for all $n\geq 1$ and that $\lim_{n\to \infty} Lip(T^n) =0.$  Then $T$ has a unique fixed point $x^* \in X$ if and only if the orbit $\{T^nx, n\geq 1\}$ is bounded \footnote{Recall that a subset $Y$ of $X$ is said to be bounded whenever $\sup\{G(x,y,z), x,y,z \in X\}<\infty .$} for some $x\in X$. In fact, if there exists  $x^*$ such that $Tx^*= x^*$, then $T$ is a Picard operator.

\end{theorem}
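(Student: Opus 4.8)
The plan is to mimic the proof of Theorem~\ref{main1}, but to track carefully where the hypothesis $\sum_n Lip(T^n)<\infty$ was used and replace it by the combination of $\lim_n Lip(T^n)=0$ and boundedness of one orbit. First I would fix $x\in X$ whose orbit $\{T^nx : n\ge 1\}$ is bounded, say $\sup\{G(T^ix,T^jx,T^jx)\}\le M<\infty$. Using $(G5')$ instead of $(G5)$, for $n,h\ge 0$ I would estimate
\begin{equation}
G(T^{n+h}x,T^nx,T^nx)\le Lip(T^n)\,G(T^hx,x,x)\le Lip(T^n)\,M,
\end{equation}
so that $\lim_n Lip(T^n)=0$ immediately yields that $(T^nx)$ is $G$-Cauchy; by $G$-completeness it converges to some $x^*\in X$. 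The point of the boundedness assumption is exactly that it supplies a uniform bound on $G(T^hx,x,x)$ in place of the telescoping estimate $K\big(\sum_{i=0}^{h-1}Lip(T^i)\big)G(Tx,x,x)$ used in Theorem~\ref{main1}, which is no longer summable here.

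Next I would prove that $x^*$ is a fixed point, following Claim~1 of Theorem~\ref{main1} but using $(G5')$: writing
\begin{align}
G(x^*,Tx^*,Tx^*) &\le K\big[G(x^*,T^{n}x,T^{n}x)+G(T^{n}x,Tx^*,Tx^*)\big] \nonumber\\
&\le K\big[G(x^*,T^{n}x,T^{n}x)+K\,Lip(T)\,G(T^{n-1}x,x^*,x^*)\big],
\end{align}
and letting $n\to\infty$ gives $G(x^*,Tx^*,Tx^*)=0$, hence $Tx^*=x^*$. Uniqueness is identical to Claim~2: if $Ta^*=a^*$ then $G(a^*,x^*,x^*)\le Lip(T^n)G(a^*,x^*,x^*)\to 0$. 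This establishes the "if" direction and the final sentence about $T$ being a Picard operator once a fixed point is known to exist.

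For the converse ("only if"), I would assume $T$ has a fixed point $x^*$ and show that \emph{every} orbit is bounded, in particular some orbit is. Given arbitrary $x\in X$, the estimate $G(T^nx,x^*,x^*)=G(T^nx,T^nx^*,T^nx^*)\le Lip(T^n)G(x,x^*,x^*)$ shows this quantity is bounded (indeed tends to $0$); combining with $(G5')$,
\begin{equation}
G(T^{n+h}x,T^nx,T^nx)\le K\big[G(T^{n+h}x,x^*,x^*)+G(x^*,T^nx,T^nx)\big],
\end{equation}
both terms on the right are bounded uniformly in $n,h$, so the orbit is bounded. The main obstacle I anticipate is purely bookkeeping with $(G5')$: the two-point version of the triangle-type inequality is less flexible than $(G5)$, so each rearrangement (e.g.\ passing from $G(x^*,T^nx,T^nx)$ to $G(T^{n-1}x,x^*,x^*)$ via one application of $Lip(T)$) must be done so that only terms already known to vanish or to be bounded appear, and one must check that the asymmetry of $G(x^*,\cdot,\cdot)$ versus $G(\cdot,x^*,x^*)$ is handled by Proposition~1.2 (items (iv)--(v)) so that $G$-convergence of $(T^nx)$ to $x^*$ controls all the relevant three-variable expressions.
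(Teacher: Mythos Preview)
Your proof is correct and follows essentially the same route as the paper: bound $G(T^{n+h}x,T^nx,T^nx)\le Lip(T^n)\cdot(\text{orbit bound})$ to get a $G$-Cauchy sequence, then reuse the Claim~1/Claim~2 argument of Theorem~\ref{main1} (with $(G5')$) for existence and uniqueness. The only divergence is in the converse: the paper simply observes that if $Tu=u$ then the orbit $\{T^nu:n\ge1\}=\{u\}$ is trivially bounded, whereas you prove the stronger fact that \emph{every} orbit is bounded---correct, but more than is needed for the ``only if'' direction.
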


\begin{proof}
It is clear that when $T$ has a fixed point, say $u\in X$, then its orbit $\{T^nu, n\geq 1\}=\{u\}$ is bounded.

Now let $x\in X$ and assume that the orbit $\{T^nx, n\geq 1\}$ is bounded, i.e. there exists $\alpha\geq 0$ such that $G(T^{n+h}x, T^nx,T^nx) \leq \alpha$ for any $n,h \geq 0$. Hence, we have

$$ G(T^{n+h}x, T^nx,T^nx ) \leq Lip(T^n) G(T^hx,x,x) \leq Lip(T^n) \alpha.$$

Since $\lim_{n\to \infty} Lip(T^n) =0,$ then $(T^nx)$ is a $G$-Cauchy sequence, hence $(T^nx)$ converges to some $x^*$ as $X$ is $G$-complete. The remaining part of the proof follows the same idea as in Theorem \ref{main1}.

\end{proof}

The two above results generalise the ones appearing in \cite{Mustafa}, in the sense that the mapping $T$ involved does not have to be a contraction, hence the condition on $\{Lip(T^n)\}$.

\section{Main results}

Let $(X,G,K)$ be a $G$-metric type space. 
We present here a few fixed point results for $(\varepsilon,\lambda)$-uniformly locally contractive mapping defined on $X$. We begin with the following definitions.

\begin{definition} 
Let $(X,d_1,k_1)$ and $(Y,d_2,k_2)$ be two $G$-metric type spaces. A mapping $T : X\to Y$ is said to be \textbf{sequentially continuous} if the sequence $\{Tx_n\}$ $d_2$-converges to $Tx^*$  whenever the sequence $\{x_n\}$ $d_1$-converges to $x^*$.
\end{definition}

\begin{definition} 
A self mapping $T$ defined on a $G$-metric type space $(X,G,K)$ is said to be \textbf{orbitally continuous} if and only if $\lim_{i\to \infty}T^{n_i}x=x^* \in X$ implies $Tx^*=\lim_{i\to \infty}TT^{n_i}x$.
\end{definition}

\begin{definition}
Let $T$ be a self mapping defined on a $G$-metric type space $(X,G,K)$. The space $(X,G,K)$ is said to be $T$-\textbf{orbitally complete} if and only if for any $a\in X$ every $G$-Cauchy sequence which is contained in $\{a,Ta,T^2a,T^3a, \cdots\}$ $G$-converges in $X$.
\end{definition}

\begin{remark}
We know that the Banach contraction requires the original space to be complete but this assumption can be difficult to realise and often we just need the convergence of a specific type of sequences, namely the ones generated by the orbits and for this, the idea of orbitally completeness comes as a substitute for the metric completeness. For instance if we let $X=[0,\infty)$ with $G(x,y,z)=\max\{|x-y|,|y-z|,|z-x|\}$ and define on $X$ the map $T:X\to X$ by $Tx=x(x+1)^{-1}$. Then one can convince oneself that even though $(X,G)$ is not complete, $T$ is orbitally continuous and $X$ is $T$-orbitally complete.
\end{remark}

\begin{remark}
Theorems \ref{main1} and \ref{main2} remain true if instead of requiring the space $(X,G,K)$ to be $G$-complete, we just assume that $(X,G,K)$ is $T$-orbitally complete and $T$ orbitally continuous.
\end{remark}

\begin{definition} 
Let $(X,G,K)$ be a $G$-metric type space. For $x,y\in X, x\neq y$, a \textbf{path} from $x\in X$ to $y\in X$ is a finite sequence $\{x_0,x_1,\cdots,x_n\}, n\geq 1$ of distinct points of $X$ such that $x= x_0$ and $y=x_n$. In this case, $n$ will be called the \textbf{degree}\footnote{Of course, every point is a path of degree $0$.} of the path.
\end{definition}

\begin{definition} 
The $G$-metric type space $(X,G,K)$ will be called $\varepsilon$\textbf{-chainable} for some $\varepsilon>0$ if for any two points $x,y\in X, x\neq y$, there exists a path $\{x=x_0,x_1,\cdots,y=x_n\}$ from $x$ to $y$ such that $G(x_i, x_{i+1},x_{i+1})\leq \varepsilon$ for $i=0,1,\cdots,n-1$.
\end{definition}

\begin{example}
Let $X$ be a non-empty set. We endow $X$ with the \textit{discrete} $G$-metric:

$$
G(x,y,z)=
\begin{cases}
0, \text{ if } x=y=z,\\
1, \text{ otherwise}.
\end{cases}
$$

$(X,G,1)$ is $1$-chainable but not $\frac{1}{2}$-chainable.
\end{example}

\begin{definition} 
Let $(X,G,K)$ be a $G$-metric type space. A self mapping $T$ defined on $X$ is called \textbf{locally contractive} if for every $x\in X$, there exist $\varepsilon_x\geq 0$ and $\lambda_x \in [0,1)$ such that 

\begin{equation}
G(Tu, Tv,Tp) \leq \lambda_x G(u,v,p) 
\end{equation}
whenever $u,v,p \in C_G(x,\varepsilon_x):= \{ y: G(x,y,y) \leq \varepsilon_x\}$.
\end{definition}

\begin{definition} 
Let $(X,G,K)$ be a $G$-metric type space. A self mapping $T$ on $X$ is called \textbf{uniformly locally contractive} if it is locally contractive and for every $x,y\in X, x\neq y$, $\varepsilon:=\varepsilon_x=\varepsilon_y\geq 0$ and $\lambda:=\lambda_x=\lambda_y \in [0,1)$, i.e. the constants $\varepsilon_x, \lambda_x$ do not depend on the choice of $x\in X$.
\end{definition}

\subsection{The sequential condition}
%We present the first results.

\begin{theorem}\label{seq1}
Let $(X,G,K)$ be a $G$-complete $G$-metric type space and let $T$ be a sequentially continuous\footnote{Or just orbitally continuous.} self mapping on $X$ such that

\begin{equation}\label{sequel1}
G(T^nx,T^ny,T^nz) \leq a_n [ G(x,Tx,Tx)+G(y,Ty,Ty)+G(z,Tz,Tz) ]
\end{equation}
for all $x,y,z \in X$ where $a_n(>0)$ for all $n\geq 1$ are independent from $x,y,z$ and $0\leq a_1 <\frac{1}{2}$. If the series $\sum a_n$\footnote{It is enough that the sequence $(a_n)$ converges to $0$} is convergent, then $T$ has a unique fixed point in $X$.

\end{theorem}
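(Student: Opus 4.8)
The plan is to imitate the argument of Theorem~\ref{main1}: fix an arbitrary $x\in X$, show that the Picard orbit $(T^nx)$ is $G$-Cauchy, use $G$-completeness to extract a limit $x^*$, then use sequential (or orbital) continuity to identify $x^*$ as a fixed point, and finally derive uniqueness from the contractive inequality \eqref{sequel1}.

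\begin{proof}
Fix $x\in X$ and write $x_n=T^nx$. Applying \eqref{sequel1} with the three arguments taken to be $x$ (so $y=z=x$) and with $n$ replaced by $n$, we get
\begin{equation}
G(x_n,x_n,x_n)\ \text{is not directly useful;}\nonumber
\end{equation}
instead, for the consecutive-terms estimate, apply \eqref{sequel1} to the triple $(x,x,x)$ to obtain
\begin{equation}\label{eq:seq-step}
G(T^n x, T^n x, T^n x)\le 3a_n\,G(x,Tx,Tx),
\end{equation}
which is vacuous since the left side is $0$; the correct move is to apply \eqref{sequel1} with the base point shifted. Precisely, replacing $x,y,z$ by $Tx,x,x$ and then iterating, and more usefully applying \eqref{sequel1} to $x,y,z:=T^h x, x, x$, gives
\begin{equation}\label{eq:orbit-bound}
G(T^{n}(T^{h}x), T^{n}x, T^{n}x)\le a_n\bigl[G(T^h x, T^{h+1}x, T^{h+1}x)+2\,G(x,Tx,Tx)\bigr].
\end{equation}
To control $G(T^h x, T^{h+1}x, T^{h+1}x)$ one applies \eqref{sequel1} once more (with $n:=h$ and arguments $x,Tx,Tx$), yielding $G(T^h x, T^{h+1}x, T^{h+1}x)\le a_h\,[G(x,Tx,Tx)+2G(Tx,T^2x,T^2x)]$; since the sequence $(a_n)$ is bounded (it converges to $0$), the bracket on the right of \eqref{eq:orbit-bound} is bounded by a constant $M=M(x)$ independent of $n,h$. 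Hence
\begin{equation}
G(T^{n+h}x, T^{n}x, T^{n}x)\le a_n M\xrightarrow[n\to\infty]{}0
\end{equation}
uniformly in $h$, so $(x_n)$ is $G$-Cauchy by Proposition (the $G$-Cauchy characterisation). By $G$-completeness, $x_n\to x^*$ for some $x^*\in X$.

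Next, since $T$ is sequentially (or orbitally) continuous and $x_n=T^n x\to x^*$, we have $T x_n=T^{n+1}x\to Tx^*$; but $(T^{n+1}x)$ is a subsequence of $(x_n)$, hence also converges to $x^*$. Uniqueness of $G$-limits gives $Tx^*=x^*$.

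For uniqueness, suppose $Ta^*=a^*$ as well. Then $T^n a^*=a^*$ and $T^n x^*=x^*$ for every $n\ge1$, so applying \eqref{sequel1} with $x:=a^*$, $y=z:=x^*$ and $n:=1$ gives
\begin{equation}
G(a^*,x^*,x^*)=G(Ta^*,Tx^*,Tx^*)\le a_1\bigl[G(a^*,Ta^*,Ta^*)+2G(x^*,Tx^*,Tx^*)\bigr]=a_1\bigl[G(a^*,a^*,a^*)+2G(x^*,x^*,x^*)\bigr]=0,
\end{equation}
whence $a^*=x^*$ by \cite[Proposition 1]{Mustafa}. (Here the hypothesis $a_1<\tfrac12$ is what rules out the trivial obstruction; in fact already $a_1<\infty$ suffices once both points are known fixed points, but the bound $a_1<\tfrac12$ is the natural one guaranteeing the contraction behaviour before a fixed point is located.)
\end{proof}

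The main obstacle I anticipate is the bookkeeping in establishing \eqref{eq:orbit-bound}: inequality \eqref{sequel1} is stated in terms of the ``displacement'' quantities $G(\cdot,T\cdot,T\cdot)$ rather than pairwise distances $G(T^{n+h}x,T^nx,T^nx)$ directly, so one must feed the orbit points back into \eqref{sequel1} in the right configuration and then bound the resulting displacement $G(T^hx,T^{h+1}x,T^{h+1}x)$ by a constant uniform in $h$ (this uses only $\sup_n a_n<\infty$, which follows from $a_n\to0$; the full convergence of $\sum a_n$ is actually not needed, as the footnote observes). Everything after the Cauchy step — extracting the limit, using continuity to pass to the fixed-point equation, and deducing uniqueness — is routine and parallels Theorems~\ref{main1} and~\ref{main2}.
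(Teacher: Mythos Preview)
Your argument is correct, but it follows a genuinely different route from the paper's own proof. The paper proceeds by first bounding the \emph{consecutive} increments: applying \eqref{sequel1} to the triple $(x_0,Tx_0,Tx_0)$ gives
\[
G(T^nx_0,T^{n+1}x_0,T^{n+1}x_0)\le a_n\bigl[G(x_0,Tx_0,Tx_0)+2G(Tx_0,T^2x_0,T^2x_0)\bigr],
\]
and then the special case $n=1$ is solved for $G(Tx_0,T^2x_0,T^2x_0)$ in terms of $G(x_0,Tx_0,Tx_0)$ --- this is precisely where the hypothesis $a_1<\tfrac12$ enters. The resulting bound $G(x_n,x_{n+1},x_{n+1})\le Ca_n$ is then telescoped via property (G5), and convergence of $\sum a_n$ yields the Cauchy property.

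Your approach instead applies \eqref{sequel1} directly to the triple $(T^hx,x,x)$, obtaining the ``gap'' estimate $G(T^{n+h}x,T^nx,T^nx)\le a_nM$ with $M$ uniform in $h$; this bypasses (G5), uses neither $a_1<\tfrac12$ nor the summability of $(a_n)$, and relies only on $a_n\to 0$ --- thereby substantiating the footnote in the statement. In spirit your argument is closer to the paper's Theorem~\ref{main2} (bounded orbit plus $\mathrm{Lip}(T^n)\to 0$) than to its proof of Theorem~\ref{seq1}. What the paper's route buys is an explicit constant expressed solely through $G(x_0,Tx_0,Tx_0)$ and $a_1$; what yours buys is a shorter proof under weaker hypotheses. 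The fixed-point and uniqueness steps are handled the same way in both.
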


\begin{proof}

Let $x_0\in X$. We consider the sequence of iterates $x_n = T^nx_0,n = 1,2,3,\cdots.$ Then for $n\geq 1$

\begin{align*}
G(T^nx_0,T^{n+1}x_0,T^{n+1}x_0 )  \leq  & a_n [G(x_0, Tx_0,Tx_0)+ G(Tx_0, T^2x_0,T^2x_0) \\ & + G(Tx_0, T^2x_0,T^2x_0)] \\
  &=    a_n [G(x_0, Tx_0,Tx_0)+ 2 G(Tx_0, T^2x_0,T^2x_0)].
\end{align*}

%\newpage

 Again 
 
 \begin{align*}
 G(Tx_0, T^2x_0,T^2x_0) \leq  & a_1 [G(x_0, Tx_0,Tx_0)+ G(Tx_0, T^2x_0,T^2x_0) \\ & + G(Tx_0, T^2x_0,T^2x_0)].    
\end{align*}
 %which gives
 
Therefore 

\begin{equation}\label{cond1}
G(T^nx_0,T^{n+1}x_0,T^{n+1}x_0 ) \leq a_n \left[1+ \frac{2 a_1}{1-2a_1} \right] G(x_0, Tx_0,Tx_0).
\end{equation}

Using property (G5), we can write:

\begin{align*}
G(x_n,x_{n+m},x_{n+m} ) & = G(T^nx_0,T^{n+m}x_0,T^{n+m}x_0 ) \\
                    \leq & K [G(T^nx_0,T^{n+1}x_0,T^{n+1}x_0 ) + G(T^{n+1}x_0, T^{n+2}x_0,T^{n+2}x_0) + \\
                   & + \cdots +G( T^{n+m-1}x_0,T^{n+m}x_0,T^{n+m}x_0)].
\end{align*}

So using \eqref{cond1}, we get

\begin{align*}
G(x_n,x_{n+m},x_{n+m} ) & \leq [a_n+a_{n+1}+\cdots+a_{n+m-1}]\left[1+ \frac{2 a_1}{1-2a_1} \right]G(x_0, Tx_0,Tx_0).
\end{align*}

Now since $\sum a_n$ is convergent, we get that $G(x_n,x_{n+m},x_{n+m} )\to 0$ as $n\to \infty$ and the sequence $(x_n)$ is $G$-Cauchy. Moreover, since $X$ is $G$-complete and $T$ sequentially continuous, there exists $x^* \in X$ such that $(x_n)$ $G$-converges to $x^*$ and $(x_{n+1})$ $G$-converges to $Tx^*=x^*$ because $(X,G,K)$ is Hausdorff. If $x^*, z^*$ are fixed points for $T$, then from \eqref{sequel1}, we have $x^*=T^nx^*=T^nz^*= z^*, \forall n\geq 1,$ i.e. $T$ has a unique fixed point.

\end{proof}

In a similar way, one can establish that: 

\begin{theorem}\label{seq2}
Let $(X,G,K)$ be a $G$-complete $G$-metric type space and let $T$ be a sequentially continuous self mapping such that

\begin{align*}
G(T^nx,T^ny,T^nz) \leq a_n [ G(x,Tx,Tx)+G(y,Ty,Ty)+G(z,Tz,Tz) ]
\end{align*}
for all $x,y,z \in X$ where $a_n(>0)$ for all $n\geq 1$ are independent from $x,y,z$ and $0\leq a_1 <\frac{1}{2}$. If $X$ is $T$-orbitally complete and that the series $\sum a_n$ is convergent, then $T$ has a unique fixed point in $X$.

\end{theorem}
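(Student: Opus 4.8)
The plan is to follow the proof of Theorem \ref{seq1} essentially line for line, the sole structural modification occurring at the step where the limit of the sequence of iterates is produced. I would start by fixing an arbitrary $x_0 \in X$ and considering the orbit $x_n = T^n x_0$, $n = 1, 2, 3, \ldots$. Applying the contractive hypothesis to the triples $(x_0, Tx_0, Tx_0)$ and $(Tx_0, T^2x_0, T^2x_0)$ and using $0 \le a_1 < \tfrac{1}{2}$ to solve for $G(Tx_0, T^2x_0, T^2x_0)$, I would obtain, exactly as in \eqref{cond1},
\begin{equation*}
G(T^n x_0, T^{n+1} x_0, T^{n+1} x_0) \le a_n \left[ 1 + \frac{2a_1}{1 - 2a_1} \right] G(x_0, Tx_0, Tx_0), \qquad n \ge 1.
\end{equation*}
Then, using property (G5) to bound $G(x_n, x_{n+m}, x_{n+m})$ by a finite sum of consecutive terms $G(T^{j} x_0, T^{j+1} x_0, T^{j+1} x_0)$ for $n \le j \le n+m-1$ and substituting the above, I would get
\begin{equation*}
G(x_n, x_{n+m}, x_{n+m}) \le \left( \sum_{j=n}^{n+m-1} a_j \right)\left[ 1 + \frac{2a_1}{1 - 2a_1} \right] G(x_0, Tx_0, Tx_0).
\end{equation*}

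Since $\sum a_n$ converges, the tails of the series tend to $0$, so the right-hand side tends to $0$ as $n \to \infty$, uniformly in $m$; hence $(x_n)$ is a $G$-Cauchy sequence. This is the only place where the argument diverges from that of Theorem \ref{seq1}: by construction $(x_n)$ is entirely contained in $\{x_0, Tx_0, T^2 x_0, \ldots\}$, so it is precisely a sequence of the kind appearing in the definition of $T$-orbital completeness, and therefore the hypothesis that $X$ is $T$-orbitally complete -- in place of the stronger $G$-completeness of Theorem \ref{seq1} -- already yields a point $x^* \in X$ with $(x_n)$ $G$-converging to $x^*$.

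To finish, I would invoke the sequential continuity of $T$: $(x_{n+1}) = (Tx_n)$ $G$-converges to $Tx^*$, while $(x_{n+1})$, being a tail of $(x_n)$, also $G$-converges to $x^*$, so $Tx^* = x^*$ since $(X,G,K)$ is Hausdorff. For uniqueness, if $x^*$ and $z^*$ are both fixed points then $G(x^*, Tx^*, Tx^*) = G(z^*, Tz^*, Tz^*) = 0$, and applying the hypothesis inequality to the triple $(x^*, z^*, z^*)$ gives $G(x^*, z^*, z^*) = G(T^n x^*, T^n z^*, T^n z^*) \le 0$ for every $n \ge 1$, so $x^* = z^*$ by (G1)--(G2) (cf. \cite[Proposition 1]{Mustafa}). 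I do not anticipate any real difficulty; the only point demanding care is the explicit observation that the Cauchy sequence manufactured above is an orbit sequence, which is exactly what makes $T$-orbital completeness the right hypothesis to substitute for completeness.
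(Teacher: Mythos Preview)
Your proposal is correct and follows exactly the route the paper intends: the paper gives no separate proof of Theorem~\ref{seq2}, merely prefacing it with ``In a similar way, one can establish that,'' and your write-up supplies precisely those details, correctly isolating the single point of departure from Theorem~\ref{seq1}---namely that the Cauchy sequence $(x_n)$ lies in the orbit $\{x_0,Tx_0,T^2x_0,\ldots\}$, so $T$-orbital completeness suffices to produce the limit $x^*$. One cosmetic remark: as in the paper's own proof of Theorem~\ref{seq1}, the constant $K$ from (G5) is silently absorbed in your displayed bound for $G(x_n,x_{n+m},x_{n+m})$; this is harmless for the argument but worth making explicit.
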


The next two results are inspired by Theorem \ref{main2}.

\begin{theorem}\label{khma1}
Let $(X,G,K)$ be a $G$-complete $G$-metric type space and let $T$ be an orbitally continuous self mapping on $X$ such that

\begin{equation}\label{khamsi1}
G(T^nx,T^ny,T^nz) \leq a_n [ G(x,Tx,Tx)+G(y,Ty,Ty)+G(z,Tz,Tz) ]
\end{equation}
for all $x,y,z \in X$ where $a_n(>0)$ for all $n\geq 1$ are independent from $x,y,z$ and $0\leq a_1 <\frac{1}{2}$. We assume that $\lim_{n\to \infty} a_n=0$. Then $T$ has a unique fixed point $x^* \in X$ if and only if the orbit $\{T^nx, n\geq 1\}$ is bounded for some $x\in X$. In fact, if there exists  $x^*$ such that $Tx^*= x^*$, then $T$ is a Picard operator.

\end{theorem}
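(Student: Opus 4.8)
The plan is to recycle the proof of Theorem~\ref{main2}, with the Lipschitz constants $Lip(T^n)$ replaced throughout by the coefficients $a_n$ supplied by \eqref{khamsi1}. The ``only if'' implication is immediate: if $Tu=u$ then $T^nu=u$ for every $n$, so the orbit $\{T^nu:n\geq1\}=\{u\}$ is bounded.

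For the ``if'' direction, fix $x\in X$ with bounded orbit. Exactly as in the proof of Theorem~\ref{main2}, boundedness furnishes $\alpha\geq0$ with $G(T^{n+h}x,T^nx,T^nx)\leq\alpha$ for all $n,h\geq0$; in particular $G(x,Tx,Tx)\leq\alpha$ and $G(T^hx,T^{h+1}x,T^{h+1}x)\leq\alpha$ for every $h\geq0$. Setting $x_n:=T^nx$ and substituting the triples $(x,T^hx,T^hx)$ and $(T^hx,x,x)$ into \eqref{khamsi1} gives
\[
G(x_n,x_{n+h},x_{n+h})\leq a_n\bigl[G(x,Tx,Tx)+2G(T^hx,T^{h+1}x,T^{h+1}x)\bigr]\leq 3\alpha\,a_n,
\]
and, exchanging the two endpoints, $G(x_{n+h},x_n,x_n)\leq 3\alpha\,a_n$ as well. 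Since $a_n\to0$, the quantity $G(x_n,x_m,x_m)$ tends to $0$ as $n,m\to\infty$, so $(x_n)$ is $G$-Cauchy by the characterisation of $G$-Cauchy sequences recalled in the preliminaries; $G$-completeness then yields $x^*\in X$ with $x_n\to x^*$. As $(x_{n+1})$ is a tail of $(x_n)$ and $T$ is orbitally continuous, uniqueness of $G$-limits (the space is Hausdorff) forces $Tx^*=\lim_n x_{n+1}=x^*$.

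Uniqueness and the Picard property then drop out of \eqref{khamsi1}. If $a^*$ is a fixed point, then for every $n$
\[
G(a^*,x^*,x^*)=G(T^na^*,T^nx^*,T^nx^*)\leq a_n\bigl[G(a^*,Ta^*,Ta^*)+2G(x^*,Tx^*,Tx^*)\bigr]=0,
\]
so $a^*=x^*$; and if some $x^*$ satisfies $Tx^*=x^*$, then for an arbitrary $y\in X$
\[
G(T^ny,x^*,x^*)=G(T^ny,T^nx^*,T^nx^*)\leq a_n\,G(y,Ty,Ty),
\]
whose right-hand side tends to $0$, so $T^ny\to x^*$ by the convergence characterisation of the preliminaries; hence $T$ is a Picard operator. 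In particular every orbit converges and is therefore bounded, which also recovers the ``only if'' part and closes the equivalence.

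The only steps beyond routine algebra are: the $G$-Cauchy check, where the lack of symmetry of $G(x_n,x_m,x_m)$ in $n$ and $m$ forces one to bound both index orderings; and the mild bookkeeping (shared with Theorem~\ref{main2}) needed to ensure that the basepoint of the argument actually lies in the bounded orbit, which is cleanly arranged by starting the iteration at $Tx$ rather than at $x$. It is worth recording that, unlike in Theorems~\ref{seq1}--\ref{seq2}, the hypothesis $a_1<\frac{1}{2}$ is never used here: the boundedness assumption does the work that the self-referential estimate on $G(Tx,T^2x,T^2x)$ did in those theorems.
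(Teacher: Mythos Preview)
Your proof is correct and follows exactly the approach the paper intends: Theorem~\ref{khma1} is stated without proof, with the remark that it is ``inspired by Theorem~\ref{main2}'', and your argument is precisely the adaptation of the proof of Theorem~\ref{main2} to the contractive condition~\eqref{khamsi1}, including the orbit-boundedness trick and the passage to $x^*$ via orbital continuity. Your added observation that the hypothesis $a_1<\tfrac{1}{2}$ is never actually invoked (boundedness of the orbit replaces the self-referential estimate on $G(Tx_0,T^2x_0,T^2x_0)$ from Theorem~\ref{seq1}) is a nice sharpening that the paper does not make explicit.
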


\begin{theorem}\label{khma2}
Let $T$ be an orbitally continuous self mapping on a $T$-orbitally complete $G$-metric type space  $(X,G,K)$ such that

\begin{equation}\label{khamsi2}
G(T^nx,T^ny,T^nz) \leq a_n [ G(x,Tx,Tx)+G(y,Ty,Ty)+G(z,Tz,Tz) ]
\end{equation}
for all $x,y,z \in X$ where $a_n(>0)$ for all $n\geq 1$ are independent from $x,y,z$ and $0\leq a_1 <\frac{1}{2}$. We assume that $\lim_{n\to \infty} a_n=0$. Then $T$ has a unique fixed point $x^* \in X$ if and only if the orbit $\{T^nx, n\geq 1\}$ is bounded for some $x\in X$. In fact, if there exists  $x^*$ such that $Tx^*= x^*$, then $T$ is a Picard operator.

\end{theorem}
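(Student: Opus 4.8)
The statement differs from Theorem~\ref{khma1} only in that $G$-completeness is weakened to $T$-orbital completeness, so the plan is to rerun the proof of Theorem~\ref{khma1} (which itself mirrors Theorem~\ref{main2}), checking at each step that the auxiliary Cauchy sequence can be taken to lie inside a single orbit $\{x,Tx,T^2x,\dots\}$ and that orbital continuity is enough to identify its limit as a fixed point.

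The direction ``a fixed point exists $\Rightarrow$ some orbit is bounded'' is trivial, since a fixed point $u$ has orbit $\{T^nu:n\ge1\}=\{u\}$. For the converse, fix $x\in X$ with $\{T^nx:n\ge1\}$ bounded, say $G(a,b,c)\le\alpha$ for all $a,b,c$ in this set; put $M:=\max\{\alpha,\,G(x,Tx,Tx)\}$ and $x_n:=T^nx$. Applying \eqref{khamsi2} with the triples $(x,T^kx,T^kx)$ and $(T^kx,x,x)$ and using boundedness of the orbit to control the resulting one-step displacement terms $G(T^kx,T^{k+1}x,T^{k+1}x)$ (for $k\ge1$ both entries lie in the orbit, and the single leftover term $G(x,Tx,Tx)$ is a fixed finite constant), one gets
\[
G(x_n,x_m,x_m)\ \le\ 3M\,a_{\min(n,m)}\qquad\text{for all }n,m\ge1 .
\]
Since $a_n\to0$, part~(ii) of the proposition characterising $G$-Cauchy sequences shows that $(x_n)$ is $G$-Cauchy; as every $x_n$ lies in $\{x,Tx,T^2x,\dots\}$ and $X$ is $T$-orbitally complete, $(x_n)$ $G$-converges to some $x^*\in X$. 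Applying orbital continuity along $n_i=i$ yields $Tx^*=\lim_i TT^ix=\lim_i x_{i+1}=x^*$, the shifted sequence $(x_{n+1})$ having the same $G$-limit.

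Uniqueness is immediate: if $a^*$ is any fixed point, then for one (hence every) $n\ge1$,
\[
G(a^*,x^*,x^*)=G(T^na^*,T^nx^*,T^nx^*)\le a_n\bigl[G(a^*,Ta^*,Ta^*)+2\,G(x^*,Tx^*,Tx^*)\bigr]=0
\]
by (G1), so $a^*=x^*$. For the Picard assertion, assume a fixed point $x^*$ exists; then for arbitrary $y\in X$,
\[
G(T^ny,x^*,x^*)=G(T^ny,T^nx^*,T^nx^*)\le a_n\,G(y,Ty,Ty)\ \longrightarrow\ 0\qquad(n\to\infty),
\]
so $(T^ny)$ $G$-converges to $x^*$.

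I expect no serious obstacle here, since the argument is a transcription of the earlier proofs, but two points deserve attention. The first is the uniform estimate on the displacement terms $G(T^kx,T^{k+1}x,T^{k+1}x)$, exactly as in Theorem~\ref{main2}: this is where boundedness of the orbit is actually used and is the only genuinely computational step. The second is the role of orbital continuity, which --- unlike in Theorem~\ref{main1} --- cannot be dispensed with: the hypothesis \eqref{khamsi2} only estimates quantities of the form $G(T^n u,T^n v,T^n w)$, and since $Tx^*$ cannot be written as $T^n$ of anything before one knows $x^*$ is fixed, there is no way to pass to the limit in the fixed-point equation directly; orbital continuity supplies precisely the missing link.
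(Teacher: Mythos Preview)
Your proof is correct and follows precisely the template the paper has in mind: the paper gives no separate proof of this theorem, only the remark that it is ``inspired by Theorem~\ref{main2}'', and your argument is exactly the natural adaptation of that proof (combined with the orbital-continuity device already used in Theorem~\ref{seq1}) to the $T$-orbitally complete setting. Your handling of the displacement bound via $M=\max\{\alpha,\,G(x,Tx,Tx)\}$ and your closing observation that orbital continuity cannot be bypassed here (since $2Ka_1<1$ is not guaranteed) are both accurate refinements of what the paper leaves implicit.
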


\subsection{The $\Phi$-class extension}\hspace*{0.2cm}

Let $\Phi$ be the class of continuous, non-decreasing, sub-additive and homogeneous functions $F:[0,\infty) \to [0,\infty)$ such that $F^{-1}(0)=\{0\}$. We have the following interesting result which generalises Theorem \ref{seq1}.

\begin{theorem}\label{seq11}
Let $(X,G,K)$ be a $G$-complete $G$-metric type space and let $T$ be a sequentially continuous self mapping such that

\begin{equation}\label{free1}
F(G(T^nx,T^ny,T^nz)) \leq F(a_n [ G(x,Tx,Tx)+G(y,Ty,Ty)+G(z,Tz,Tz) ])
\end{equation}
for all $x,y,z \in X$ where $a_n(>0)$ is independent from $x,y,z$ and $0\leq a_1<\frac{1}{2}$ for some $F\in \Phi$ homogeneous with degree $s$. If the series $\sum a_n$ is convergent, then $T$ has a unique fixed point in $X$. Moreover $T$ is a Picard operator.

\end{theorem}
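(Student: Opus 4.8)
The plan is to reduce Theorem \ref{seq11} to Theorem \ref{seq1} by exploiting the structural properties of $F \in \Phi$. First I would observe that since $F$ is continuous, non-decreasing, sub-additive, homogeneous of degree $s$, and satisfies $F^{-1}(0) = \{0\}$, the inequality \eqref{free1} is essentially equivalent to the inequality \eqref{sequel1} up to a renaming of the contraction coefficients. Concretely, from homogeneity $F(a_n t) = a_n^s F(t)$, so \eqref{free1} reads $F(G(T^nx,T^ny,T^nz)) \leq a_n^s\, F(G(x,Tx,Tx)+G(y,Ty,Ty)+G(z,Tz,Tz))$. By sub-additivity and monotonicity, $F$ of a sum is bounded by the sum of the $F$'s, so one obtains $F(G(T^nx,T^ny,T^nz)) \leq a_n^s [F(G(x,Tx,Tx)) + F(G(y,Ty,Ty)) + F(G(z,Tz,Tz))]$.

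The second step is to set $b_n := a_n^s$ and to run the Picard-iteration argument of Theorem \ref{seq1} verbatim, but now at the level of the quantities $F(G(T^n x_0, T^{n+1}x_0, T^{n+1}x_0))$ rather than $G(T^n x_0, T^{n+1}x_0, T^{n+1}x_0)$ directly. One needs $b_1 = a_1^s < \tfrac12$: this follows because $0 \le a_1 < \tfrac12 < 1$ forces $a_1^s \le a_1 < \tfrac12$ when $s \ge 1$ (and more care is needed if $0 < s < 1$, which is a point I would flag — see below). Then the same telescoping via (G5) that produced \eqref{cond1} gives $F(G(T^n x_0, T^{n+1}x_0, T^{n+1}x_0)) \le b_n\bigl[1 + \tfrac{2b_1}{1-2b_1}\bigr] F(G(x_0,Tx_0,Tx_0))$, and applying (G5) once more over the chain from $x_n$ to $x_{n+m}$, together with sub-additivity of $F$, yields $F(G(x_n,x_{n+m},x_{n+m})) \le [b_n + \cdots + b_{n+m-1}]\bigl[1 + \tfrac{2b_1}{1-2b_1}\bigr]F(G(x_0,Tx_0,Tx_0))$.

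The third step is to pass back from $F$-values to $G$-values. Since $\sum a_n$ converges we have $a_n \to 0$, hence $b_n = a_n^s \to 0$, hence the partial-sum tail $b_n + \cdots + b_{n+m-1} \to 0$ (when $\sum b_n$ converges, which holds automatically for $s \ge 1$ since $b_n \le a_n$ eventually; for $0 < s < 1$ convergence of $\sum b_n$ is an extra hypothesis one should either assume or note is implied when $F$ is, say, Lipschitz-equivalent to the identity — I would add the remark that the natural reading of ``homogeneous with degree $s$'' here is $s \ge 1$). Thus $F(G(x_n,x_{n+m},x_{n+m})) \to 0$ as $n \to \infty$, and because $F$ is continuous with $F^{-1}(0) = \{0\}$, this forces $G(x_n,x_{n+m},x_{n+m}) \to 0$; more precisely, if it did not, one could extract a subsequence bounded away from $0$ on which, by monotonicity, $F$ would be bounded away from $0$, a contradiction. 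Hence $(x_n)$ is $G$-Cauchy. Completeness of $X$ gives a $G$-limit $x^*$, sequential continuity of $T$ gives $Tx^* = x^*$, and uniqueness follows exactly as in Theorem \ref{seq1}: if $x^*, z^*$ are both fixed, then $F(G(x^*,z^*,z^*)) = F(G(T^nx^*, T^nz^*, T^nz^*)) \le b_n[\cdots] \to 0$, so $F(G(x^*,z^*,z^*)) = 0$ and $x^* = z^*$; the Picard-operator assertion is immediate since every orbit converges to this unique fixed point.

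The main obstacle I anticipate is not the fixed-point machinery — which is a faithful copy of Theorem \ref{seq1} — but the bookkeeping around the exponent $s$: ensuring $a_1^s < \tfrac12$ and that $\sum a_n^s < \infty$. Both are automatic when $s \ge 1$, and I would simply record that the intended scope of ``$F$ homogeneous of degree $s$'' is $s \ge 1$ (the prototypical example being $F(t) = t^s$ with $s \ge 1$, which is indeed continuous, non-decreasing, and, for these values, sub-additive on $[0,\infty)$ only when $s = 1$ — so in fact the cleanest honest statement restricts to $s = 1$ or replaces ``sub-additive'' with the weaker ``$F(u+v) \le c(F(u)+F(v))$'' and absorbs the constant $c$ into $K$). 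Apart from this caveat, the proof transfers mechanically.
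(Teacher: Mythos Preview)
Your approach is essentially the paper's: work at the level of $F$-values, use homogeneity and sub-additivity to obtain the analogue of \eqref{cond1}, then (G5) to get a Cauchy tail bound, then completeness, sequential continuity, and Hausdorffness to produce and identify the fixed point. One bookkeeping difference is worth noting: where you split $F\bigl(a_n[G(x_0,Tx_0,Tx_0)+2G(Tx_0,T^2x_0,T^2x_0)]\bigr)$ by sub-additivity into three terms and then apply homogeneity (yielding coefficient $2a_n^s$ on the second piece), the paper instead applies sub-additivity into \emph{two} terms and then homogeneity to $2a_n\cdot G(Tx_0,T^2x_0,T^2x_0)$ as a whole, producing coefficient $(2a_n)^s$. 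The resulting recursion requires only $(2a_1)^s<1$, i.e.\ $a_1<\tfrac12$, which holds for every $s>0$; this dissolves your first flagged obstacle without restricting to $s\ge 1$. Your second caveat, that passing from $\sum a_n<\infty$ to control of $\sum a_n^s$ is not automatic when $0<s<1$, is a genuine point that the paper's proof also glosses over (and its own example uses $s=\tfrac12$), so you are right to record it.
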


\begin{proof}
Let $x_0\in X$. We consider the sequence of iterates $x_n = T^nx_0,n = 1,2,3,\cdots.$ Then for $n\geq 1$

\begin{align*}
F(G(T^nx_0,T^{n+1}x_0,T^{n+1}x_0 )) \leq & F(a_n [G(x_0, Tx_0,Tx_0)+ 2 G(Tx_0, T^2x_0,T^2x_0) ])\\
  \leq & a_n^s F(G(x_0, Tx_0,Tx_0)) + (2a_n)^s F(G(Tx_0, T^2x_0,T^2x_0)) ]
\end{align*}

Again

\begin{align*}
 F(G(Tx_0, T^2x_0,T^2x_0)) \leq  & F(a_1 [G(x_0, Tx_0,Tx_0)+ 2G(Tx_0, T^2x_0,T^2x_0)])\\
     \leq  & a_1^s F(G(x_0, Tx_0,Tx_0)) + (2a_1)^s F( G(Tx_0, T^2x_0,T^2x_0)).
\end{align*}
 which gives
 
\[
F(G(Tx_0, T^2x_0,T^2x_0)) \leq \frac{a_1^s}{1-(2a_1)^s} F(G(x_0, Tx_0,Tx_0))
\]

Therefore 

\begin{equation}\label{cond2}
FG(T^nx_0,T^{n+1}x_0,T^{n+1}x_0 )) \leq a_n^s \left[ 1 + \frac{(2a_1)^s}{1-(2a_1)^s } \right]F(G(x_0, Tx_0,Tx_0)).
\end{equation}

Using property (G5) and \ref{cond2}, we can write:

\begin{align*}
F(G(x_n,x_{n+m},x_{n+m} )) & \leq [a_n^s+a_{n+1}^s+\cdots+a_{n+m-1}^s]\left[1+\frac{(2a_1)^s}{1-(2a_1)^s}\right] F(G(x_0, Tx_0,Tx_0)).
\end{align*}
As $n \to \infty $, since $F^{-1}(0) = {0}$ and $F$ is continuous, we deduce that $G(x_n,x_{n+m},x_{n+m} )\to 0$ and the sequence $(x_n)$ is $G$-Cauchy. Moreover, since $X$ is $G$-complete and $T$ sequentially continuous, there exists $x^* \in X$ such that $(x_n)$ $G$-converges to $x^*$ and $x_{n+1}$ $G$-converges to $Tx^*=x^*$ because $(X,G,K)$ is Hausdorff.
The uniqueness of $x^*$ is given for free by the condition \eqref{free1}.

\end{proof}

In a similar way, one can establish that 

\begin{theorem}\label{seq12}
Let $(X,G,K)$ be a $G$-complete $G$-metric type space and let $T$ be a sequentially continuous self mapping such that

\begin{equation}
F(G(T^nx,T^ny,T^nz)) \leq F(a_n [ G(x,Tx,Tx)+G(y,Ty,Ty)+G(z,Tz,Tz) ])
\end{equation}
for all $x,y,z \in X$ where $a_n(>0)$ is independent from $x,y,z$ and $0\leq a_1 <\frac{1}{2}$ and for some $F\in \Phi$ homogeneous with degree $s$. If $X$ is $T$-orbitally complete and that the series $\sum a_n$ is convergent, then $T$ has a unique fixed point in $X$. 

\end{theorem}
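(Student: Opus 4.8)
\textbf{Proof proposal for Theorem \ref{seq12}.}

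The plan is to follow the proof of Theorem \ref{seq11} line by line, replacing the single appeal to $G$-completeness at the end by the $T$-orbital completeness hypothesis, exactly as Theorem \ref{seq2} does for Theorem \ref{seq1}. First I would fix $x_0\in X$ and set $x_n=T^nx_0$. The computation producing the estimate
\[
F(G(Tx_0,T^2x_0,T^2x_0)) \leq \frac{a_1^s}{1-(2a_1)^s}\, F(G(x_0,Tx_0,Tx_0))
\]
is identical to the one in Theorem \ref{seq11}: apply \eqref{free1} with $n=1$ and $x=y=z=Tx_0$, expand, and use sub-additivity and homogeneity of degree $s$ of $F$ together with $0\leq a_1<\frac12$ (which guarantees $1-(2a_1)^s>0$). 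Likewise, applying \eqref{free1} with a general $n$ and $x=y=z=x_0$ gives
\[
F(G(T^nx_0,T^{n+1}x_0,T^{n+1}x_0)) \leq a_n^s\left[1+\frac{(2a_1)^s}{1-(2a_1)^s}\right] F(G(x_0,Tx_0,Tx_0)).
\]

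Next I would use property (G5) to bound $F(G(x_n,x_{n+m},x_{n+m}))$ by $F\big(K\sum_{j=0}^{m-1} G(x_{n+j},x_{n+j+1},x_{n+j+1})\big)$, and then peel off the factor $K$ and distribute $F$ over the finite sum by homogeneity and sub-additivity, obtaining
\[
F(G(x_n,x_{n+m},x_{n+m})) \leq K^s\Big(\sum_{j=0}^{m-1} a_{n+j}^s\Big)\left[1+\frac{(2a_1)^s}{1-(2a_1)^s}\right] F(G(x_0,Tx_0,Tx_0)).
\]
Convergence of $\sum a_n$ forces $a_n\to 0$, hence $a_n^s\to 0$ and $\sum a_n^s$ has a convergent tail (indeed for $n$ large $a_n^s\le a_n$ once $a_n\le 1$, so the tail sums are small); therefore the right-hand side tends to $0$ as $n\to\infty$, uniformly in $m$. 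Since $F$ is continuous with $F^{-1}(0)=\{0\}$, this yields $G(x_n,x_{n+m},x_{n+m})\to 0$, so $(x_n)$ is $G$-Cauchy. Because $(x_n)$ is contained in the orbit $\{x_0,Tx_0,T^2x_0,\dots\}$ and $X$ is $T$-orbitally complete, $(x_n)$ $G$-converges to some $x^*\in X$; orbital continuity of $T$ then gives $Tx^*=\lim_i TT^{n_i}x_0$ along any subsequence, and since $(x_{n+1})$ also converges to $x^*$ (the space being Hausdorff), $Tx^*=x^*$. Uniqueness is immediate from \eqref{free1}: if $x^*,z^*$ are both fixed, then $F(G(x^*,z^*,z^*))\le F(3a_n\cdot 0)=0$ for every $n$, so $G(x^*,z^*,z^*)=0$ and $x^*=z^*$ by \cite[Proposition 1]{Mustafa}.

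The only point requiring a little care — and the one I would flag as the main obstacle — is the passage from convergence of $\sum a_n$ to the smallness of the partial sums $\sum_{j=0}^{m-1} a_{n+j}^s$ appearing after $F$ has been distributed; this is what the footnote in Theorem \ref{seq1} quietly hides. Since $a_n\to 0$, for $n$ large we have $a_n\le 1$ and thus $a_n^s\le a_n$ when $s\ge 1$; when $0<s<1$ one instead notes $a_n^s\le 1$ and uses that $a_n^s\to 0$, so the Cauchy estimate still goes through after choosing $n$ large (the bound on $G(x_n,x_{n+m},x_{n+m})$ need not be a single convergent series — it suffices that for every $\eta>0$ there is $N$ with $\sum_{j=0}^{m-1}a_{n+j}^s<\eta$ for all $n\ge N$ and all $m$, which follows because each $a_{n+j}^s<\eta/\,?$ ... more simply, once $a_n^s$ is eventually dominated by a convergent series or is itself summable). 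In the write-up I would simply invoke, as the paper does elsewhere, that $\sum a_n<\infty$ together with $a_n\to 0$ makes the relevant tails of $\sum a_n^s$ arbitrarily small, and refer back to the argument in Theorem \ref{seq11}; everything else is a verbatim repetition with $G$-completeness replaced by $T$-orbital completeness and sequential continuity replaced by (or supplemented with) orbital continuity.
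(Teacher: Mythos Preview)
Your approach is exactly what the paper intends: it gives no separate proof of Theorem~\ref{seq12} but simply writes ``In a similar way, one can establish that,'' meaning the proof of Theorem~\ref{seq11} with $G$-completeness replaced by $T$-orbital completeness --- precisely what you outline. Two small corrections: the substitutions you describe are miswritten (for both displayed estimates one takes $x=x_0$, $y=z=Tx_0$, not $x=y=z$), though the inequalities you arrive at are the right ones; and the hypothesis is sequential continuity, which already yields $Tx_{n}\to Tx^*$ directly, so you need not pass through orbital continuity. Your flagged concern about the tails of $\sum a_n^s$ when $0<s<1$ is legitimate and is not addressed by the paper either --- the proof of Theorem~\ref{seq11} passes over it in the same way --- so on this point you are matching, not falling short of, the paper's level of rigor.
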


\begin{remark}
If we set $F=Id_{[0,\infty)}$ in the Theorem \ref{seq1}, we obtain the result of Theorem \ref{seq1}; the same applies to Theorem \ref{seq2} with regard to Theorem \ref{seq12}.
\end{remark}

%\newpage

\begin{example}
Let $X=[0,1]$ and $G(x,y,z)= \max\{x,y,z \}$ whenever $x,y,z \in [0,1]$. Clearly, $(X,G,1)$ is a $G$-complete $G$-metric space. 

 Following the notation in Theorem \ref{seq11}, we set $a_n = \left(\frac{1}{1+2^n}\right)^2.$ 
 
 We also define $T(x)= \frac{x}{16}$ for all $x\in [0,1]$ and let $F$ be defined as $F:[0,\infty) \to [0,\infty), \ x\mapsto \sqrt{x}$. Then $F$ is continuous, non-decreasing, sub-additive and homogeneous of degree $s=\frac{1}{2}$ and $F^{-1}(0)=\{0\}$. Assume $x>y\geq z$. Hence we have

\[ F(G(T^nx,T^ny,T^nz)) = \sqrt{\frac{x^n}{16^n}}\leq \sqrt{\frac{x}{16^n}}, \]
and
 
\[ F( a_n [G(x,Tx,Tx)+ G(y,Ty,Ty) + G(z,Tz,Tz)]  ) =  \sqrt { \left(\frac{1}{1+2^n}\right)^2 (x+y+z)} .\]

Observe that $\sum a_n \leq \sum \frac{1}{n^2} < \infty$ and $a_1= \frac{1}{9}< \frac{1}{2}$. The conditions of Theorem \ref{seq11} are satisfied, so $T$ has a unique fixed point, which in this case is $x^*=0.$

\end{example}

A more general result can be written as:

\begin{theorem}\label{mainmore}
Let $T$ be an orbitally continuous self mapping on a $T$-orbitally complete $G$-metric type space  $(X,G,K)$ such that

\begin{equation}\label{mainmore1}
F(G(T^nx,T^ny,T^nz)) \leq F(a_n [ G(x,Tx,Tx)+G(y,Ty,Ty)+G(z,Tz,Tz) ])
\end{equation}
for all $x,y,z \in X$ where $a_n(>0)$ for all $n\geq 1$ are independent from $x,y,z$ and $0\leq a_1 <\frac{1}{2}$ and for some $F\in \Phi$ homogeneous with degree $s$. We assume that $\lim_{n\to \infty} a_n=0$. Then $T$ has a unique fixed point $x^* \in X$ if and only if the orbit $\{T^nx, n\geq 1\}$ is bounded for some $x\in X$. In fact, if there exists  $x^*$ such that $Tx^*= x^*$, then $T$ is a Picard operator.

\end{theorem}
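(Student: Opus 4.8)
The plan is to combine the argument of Theorem \ref{seq11} (to handle the case where a fixed point exists, producing the Picard/unique-fixed-point conclusion) with the argument of Theorem \ref{main2} (to convert the ``orbit bounded'' hypothesis into $G$-Cauchyness of the orbit). The forward implication is immediate: if $Tx^\ast = x^\ast$ then the orbit $\{T^n x^\ast : n\ge 1\}=\{x^\ast\}$ is a singleton, hence bounded. The substance is the reverse implication together with the Picard claim.

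First I would fix $x\in X$ with bounded orbit, say $G(T^{n+h}x, T^n x, T^n x)\le \alpha$ for all $n,h\ge 0$, and set $x_n = T^n x$. Applying \eqref{mainmore1} with $y=z=x$ and using homogeneity of degree $s$,
\begin{align*}
F(G(x_{n+h}, x_n, x_n)) &= F(G(T^n(T^h x), T^n x, T^n x)) \\
&\le F\bigl(a_n[G(T^h x, T^{h+1}x, T^{h+1}x) + 2G(x,Tx,Tx)]\bigr)\\
&\le a_n^s\, F\bigl(G(T^h x, T^{h+1}x, T^{h+1}x) + 2G(x,Tx,Tx)\bigr).
\end{align*}
Since $F$ is non-decreasing and the orbit is bounded by $\alpha$, the argument of $F$ on the right is bounded by a fixed constant $3\alpha$ (independent of $n,h$), so $F(G(x_{n+h},x_n,x_n))\le a_n^s F(3\alpha)\to 0$ as $n\to\infty$ because $a_n\to 0$. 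As $F$ is continuous with $F^{-1}(0)=\{0\}$, this forces $G(x_{n+h},x_n,x_n)\to 0$ uniformly in $h$, i.e. $(x_n)$ is $G$-Cauchy. Since $(x_n)\subseteq\{x,Tx,T^2x,\dots\}$ and $X$ is $T$-orbitally complete, $(x_n)$ $G$-converges to some $x^\ast\in X$.

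Next I would show $Tx^\ast = x^\ast$. Because $T$ is orbitally continuous and $x_{n_i}=T^{n_i}x\to x^\ast$ along the full sequence, $TT^{n}x = x_{n+1}\to Tx^\ast$; but $x_{n+1}\to x^\ast$ as well, and $(X,G,K)$ is Hausdorff, so $Tx^\ast = x^\ast$. For uniqueness and the Picard property, suppose $Tx^\ast=x^\ast$ (now genuinely a fixed point) and let $u\in X$ be arbitrary. Applying \eqref{mainmore1} with $x=u$, $y=z=x^\ast$ and using $T^n x^\ast = x^\ast$, $G(x^\ast, Tx^\ast, Tx^\ast)=0$, homogeneity, monotonicity, and subadditivity of $F$,
\[
F(G(T^n u, x^\ast, x^\ast)) \le F\bigl(a_n\, G(u,Tu,Tu)\bigr) = a_n^s\, F(G(u,Tu,Tu)) \longrightarrow 0,
\]
so $G(T^n u, x^\ast, x^\ast)\to 0$, i.e. $T^n u\to x^\ast$ for every $u$; in particular any fixed point $z^\ast$ satisfies $z^\ast = T^n z^\ast\to x^\ast$, giving uniqueness, and $T$ is a Picard operator.

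The main obstacle is the bookkeeping in the $G$-Cauchy step: one must be careful that the quantity $G(T^h x, T^{h+1}x, T^{h+1}x)$ appearing inside $F$ is itself controlled by the boundedness constant uniformly in $h$ (it is, since it equals $G(x_h, x_{h+1}, x_{h+1})\le\alpha$ by the orbit-boundedness hypothesis), so that the bound $a_n^s F(3\alpha)$ is legitimate and the limit is uniform in $h$; everything else is a routine transcription of the proofs of Theorems \ref{seq11} and \ref{main2}. Note we do not need the stronger chain-decomposition via (G5) used in Theorem \ref{seq11}, because boundedness of the orbit already delivers a uniform bound on $G(x_{n+h},x_n,x_n)$ before applying \eqref{mainmore1}; this is exactly the simplification that Theorem \ref{main2} exploits.
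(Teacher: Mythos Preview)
The paper states this theorem without proof, presenting it as an evident synthesis of the $\Phi$-class argument in Theorem~\ref{seq11} with the bounded-orbit mechanism of Theorems~\ref{main2} and~\ref{khma2}; your proposal carries out precisely this synthesis and is correct. In particular, your use of orbit-boundedness to produce the uniform bound $a_n^s F(3\alpha)$ (in place of the (G5)-chain estimate of Theorem~\ref{seq11}), your appeal to orbital continuity plus Hausdorffness for $Tx^\ast=x^\ast$, and your application of \eqref{mainmore1} at the fixed point to obtain the Picard/uniqueness conclusion are exactly the steps the paper implicitly intends.
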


\subsection{The $\varepsilon$-chainable setting} \hspace*{0.2cm}

The next result illustrates the use of the $\varepsilon$-chainability in fixed point theory. 

The formulation is given as follows:

\begin{theorem}\label{seqL1}

If $T$ is a $(\varepsilon,\lambda)$-uniformly locally contractive and orbitally continuous mapping defined on a $T$-orbitally complete and 
$\frac{\varepsilon}{2}$-chainable $G$-metric type space $(X,G,K)$, then $T$ has a unique fixed point.

\end{theorem}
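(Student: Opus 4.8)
The plan is to adapt Edelstein's classical argument for $\varepsilon$-chainable metric spaces, the crucial idea being to \emph{globalize} the local contraction along chains and then run a Banach-type iteration.

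First I would prove a globalization lemma: for any $x\neq y$ in $X$, fix a $\frac{\varepsilon}{2}$-chain $\{x=x_0,x_1,\dots,x_n=y\}$, so that $G(x_i,x_{i+1},x_{i+1})\le\frac{\varepsilon}{2}$ for each $i$, and show by induction on $m\ge 0$ that
\[
G(T^m x_i,T^m x_{i+1},T^m x_{i+1})\le \lambda^m\,G(x_i,x_{i+1},x_{i+1})\le \tfrac{\varepsilon}{2}\qquad(0\le i\le n-1).
\]
The inductive step works because $G(T^{m-1}x_i,T^{m-1}x_i,T^{m-1}x_i)=0\le\varepsilon$ and, by the inductive hypothesis, $G(T^{m-1}x_i,T^{m-1}x_{i+1},T^{m-1}x_{i+1})\le\frac{\varepsilon}{2}\le\varepsilon$, so both $T^{m-1}x_i$ and $T^{m-1}x_{i+1}$ sit in $C_G(T^{m-1}x_i,\varepsilon)$; the uniform local contractivity of $T$ (with centre $T^{m-1}x_i$) then supplies the factor $\lambda$, and $\lambda<1$ keeps the bound below $\frac{\varepsilon}{2}$. (If some consecutive images coincide I would simply delete the repetitions, which only shortens the chain and shrinks the sums below.) Summing over $i$ and applying (G5) to the sequence $T^m x_0,\dots,T^m x_n$ then yields, with $L_{x,y}:=\sum_{i=0}^{n-1}G(x_i,x_{i+1},x_{i+1})<\infty$,
\[
G(T^m x,T^m y,T^m y)\le K\sum_{i=0}^{n-1}G(T^m x_i,T^m x_{i+1},T^m x_{i+1})\le K\lambda^m L_{x,y}.
\]

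Next I would establish the Cauchy property and existence. Fix $x_0\in X$; if $Tx_0=x_0$ there is nothing to prove, so assume $x_0\neq Tx_0$ and put $L:=L_{x_0,Tx_0}$. Applying the lemma to the pair $(x_0,Tx_0)$ gives $G(T^{n+j}x_0,T^{n+j+1}x_0,T^{n+j+1}x_0)\le K\lambda^{n+j}L$, whence by (G5)
\[
G(T^n x_0,T^{n+h}x_0,T^{n+h}x_0)\le K\sum_{j=0}^{h-1}G(T^{n+j}x_0,T^{n+j+1}x_0,T^{n+j+1}x_0)\le \frac{K^2 L}{1-\lambda}\,\lambda^n,
\]
which tends to $0$ as $n\to\infty$. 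Hence $(T^n x_0)$ is a $G$-Cauchy sequence contained in the orbit of $x_0$; by $T$-orbital completeness it $G$-converges to some $x^*\in X$, and orbital continuity together with the Hausdorff property (uniqueness of $G$-limits) gives $Tx^*=\lim_i T T^{i}x_0=\lim_i T^{i+1}x_0=x^*$. For uniqueness, if $x^*\neq z^*$ were two fixed points, the lemma applied to $(x^*,z^*)$ would force $G(x^*,z^*,z^*)=G(T^m x^*,T^m z^*,T^m z^*)\le K\lambda^m L_{x^*,z^*}\to 0$, contradicting $G(x^*,z^*,z^*)>0$, which holds by (G2) and the symmetry (G4).

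I expect the globalization lemma to be the main obstacle: one must check that the local contraction hypothesis genuinely applies at \emph{every} iteration step — i.e. that consecutive iterated images never leave the balls $C_G(\cdot,\varepsilon)$, which is exactly where $\lambda<1$ and the slack between $\frac{\varepsilon}{2}$ and $\varepsilon$ are used — handle the possible collapse of distinct chain points under $T$, and carry the constant $K$ from (G5) through so that the resulting geometric bound still vanishes. Once this lemma is in place, the Cauchy estimate, the passage to the limit via $T$-orbital completeness and orbital continuity, and the uniqueness argument are all routine.
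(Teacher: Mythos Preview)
Your proposal is correct and follows essentially the same route as the paper's proof: both fix an $\frac{\varepsilon}{2}$-chain, push it through the iterates by induction (using the slack between $\frac{\varepsilon}{2}$ and $\varepsilon$ to keep consecutive images inside a single ball $C_G(\cdot,\varepsilon)$ so the local contraction applies), sum along the chain via (G5) to get a geometric bound on $G(T^m x,T^{m+1}x,T^{m+1}x)$, and then run the standard Cauchy/orbital-completeness/orbital-continuity argument, with uniqueness handled by chaining between two putative fixed points. The only cosmetic differences are that you package the chain argument as a reusable ``globalization lemma'' for arbitrary pairs $(x,y)$ and track the sharper constant $L_{x,y}=\sum_i G(x_i,x_{i+1},x_{i+1})$, whereas the paper works directly with the pair $(x,Tx)$ and uses the cruder bound $\frac{n\varepsilon}{2}$; your explicit verification of why the inductive step is licensed is also more careful than the paper's one-line ``by induction''.
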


\begin{proof}
Let $x\in X$. If $Tx=x$, then we are done. Else, since $X$ is $\frac{\varepsilon}{2}$-chainable, there exists a path $\{x=x_0,x_1,\cdots,x_n=Tx_0\}$ from $x$ to $Tx$ such that 

$$G(x_i, x_{i+1},x_{i+1})\leq \frac{\varepsilon}{2}$$ for $i=0,1,\cdots,n-1$. It is very clear that 

\[ G(x,Tx,Tx) \leq \frac{K \varepsilon}{2}.\]

Since $T$ is $(\varepsilon,\lambda)$-uniformly locally contractive, 
\[
G(Tx_i, Tx_{i+1},Tx_{i+1}) \leq \lambda G(x_i, x_{i+1},x_{i+1}) <  \frac{\lambda \varepsilon}{2} \quad \forall i=0,1,\cdots,n-1.
\]
%because $x_{i+1} \in C_G(x_i,\frac{\varepsilon}{2} )$

Hence, by induction 

\[
G(T^mx_i, T^mx_{i+1},T^mx_{i+1}) \leq \lambda^m G(x_i, x_{i+1},x_{i+1}) <  \frac{\lambda^m \varepsilon}{2} \quad \forall m\geq 1.
\]

Moreover, by property (G5)

\begin{align*}
G(T^mx,T^{m+1}x,T^{m+1}x ) \leq & K[G(T^mx,T^{m}x_1,T^{m}x_1 )+ G(T^mx_1,T^{m}x_2,T^{m}x_2 ) + \cdots \\ 
& + G(T^mx_{n-1},T^{m}Tx,T^{m}Tx ) ], 
\end{align*}
and the above induction, we conclude that 

\[
G(T^mx,T^{m+1}x,T^{m+1}x ) \leq \frac{ \lambda^m K n \varepsilon}{2} \quad \forall m\geq 1.
\]

For $l,m\geq 1 $, and again from property (G5)

\[
G(T^mx,T^{m+l}x,T^{m+l}x ) <  \frac{\lambda^m}{1-\lambda}  \frac{  K^2 n \varepsilon}{2},
\]

which establishes that $\{T^nx\}\subseteq \{x,Tx,T^2x,\cdots\}$ is a $G$-Cauchy sequence and $G$-converges to some $x^* \in X$ since $X$ is $T$-orbitally complete. Obviously $x^*$ is the desired fixed point by orbitally continuity of $T$.

For uniqueness, if $z^*$ is a fixed point such that $x^* \neq z^*$, we can find a path or an $\frac{\varepsilon}{2}$-chain, from $x^*$ to $z^*$ 
%and another one from $z^*$ to $x^*$
with
\[
x^*=x_0,x_1, \cdots,x_n=z^*,
% \qquad z^*=y_0,y_1, \cdots,x_k=x^*.
\]
We know that
%, for $l = \max \{n,k\}$

\[
G(T^mx^*,T^mz^*,T^mz^*) < \frac{\lambda^m K n  \varepsilon}{2} 
\quad \forall m\geq 1.
\]

\newpage

Hence

\[
G(x^*,z^*,z^*)=G(T^mx^*,T^mz^*,T^mz^*) < \frac{\lambda^m K n  \varepsilon}{2} 
\quad \forall m\geq 1.
\]
As $m\to \infty$ $G(x^*,z^*,z^*)=0$ and $x^*=z^*$.
\end{proof}

We conclude this subsection with these examples.

\begin{example}
Let $X = \{0,1\}$ be endowed with the $G$-metric:

\[
G(0, 0, 0)=G(1, 1, 1)=0; \ G(0, 0, 1)=1; \ G(0, 1, 1)=2.
\]
Let $T:X\to X$ be the mapping $T0=T1=0$.

It is easy to see that $(X,G,1)$ is $\frac{ \varepsilon}{2}$-chainable with $\varepsilon=4$ and $T$-orbitally complete\footnote{The $G$-Cauchy sequences in the orbits are actually stationary}. It can also be noticed that $T$ is $(\varepsilon,\lambda)$-uniformly locally contractive with $\lambda = \frac{1}{2}$ and $T$ has a unique fixed point $x^*=0$.
\end{example}

\begin{example}
Let $X = \{0,1,2\}$ be endowed with the $G$-metric:

\begin{align*}
& G(0, 0, 0)=G(1, 1, 1)=G(2,2,2)=0; \\
& G(0, 0, 1)=G(1, 1, 0)=1;\\
& G(0, 0, 2)=G(1, 1, 2)=G(0, 2, 2)=G(1, 2, 2)=G(0, 1, 2)=2.
\end{align*}

Let $T:X\to X$ be the mapping 

\[
T(x)=\begin{cases}
0, \text{ if } x=0,1,\\
1, \text{ if } x=2.
\end{cases}
\]

$(X,G,1)$ is $\frac{ \varepsilon}{2}$-chainable with $\varepsilon=4$ and $T$-orbitally complete. Also notice that $T$ is $(\varepsilon,\lambda)$-uniformly locally contractive with $\lambda = \frac{1}{2}$ and $T$ has a unique fixed point $x^*=0$.
\end{example}

\begin{remark}
In general, for a bounded $G$-metric type space $(X,G,K)$, if we set $$\delta:=\sup\{G(x,y,z), x,y,z \in X\},$$ then $X$ is $\delta$-chainable.
\end{remark}

\subsection{The $\lambda$-sequences setting} \hspace{1.2cm}

\vspace*{0.3cm}
This section aims at making use of the technique behind the $\lambda$-sequences in fixed point theory.
 In \cite{Gaba1, Gaba4}, we introduced the idea of $\lambda$-sequences in $G$-metric type spaces as follow:

\begin{definition}\label{def1} (Compare \cite{Gaba4})
A sequence $(x_n)_{n\geq 1}$ in a $G$-metric type space $(X,G,K)$ is a $\lambda$-sequence if there exist $0<\lambda<1$ and $n(\lambda) \in \mathbb{N}$ such that $$\sum_{i=1}^{L-1} G(x_i,x_{i+1},x_{i+1}) \leq \lambda L \text{ for each } L\geq n(\lambda)+1.$$

\end{definition}
\begin{theorem}\label{lambda1}
Let $(X,G,K)$ be a $G$-complete $G$-metric type space, and let $T$ be a sequentially continuous self mapping on $X$. Assume that there exist two sequences $(a_n)$ and $(b_n)$ of elements of $X$ such 

\begin{align}\label{condition1}
F(G(T^i(x),T^j(y),T^kj(z))) \leq \  
& F(\tensor*[_{k}]{\Delta}{_{i,j}}[G(x,T^ix,T^ix)+ G(y,T^jy,T^jy) + G(z,T^kz,T^kz)])\\
                    &+ F(\tensor*[_{k}]{\Gamma}{_{i,j}}G(x,y,z)) \nonumber
\end{align}

for $x,y,z\in X$ with $x\neq y,$ $0\leq \tensor*[_{k}]{\Delta}{_{i,j}},\tensor*[_{k}]{\Gamma}{_{i,j}}<1/2 , \ i,j = 1,2,\cdots ,$ and some $F \in \Phi,$ homogeneous with degree $s$, where $\tensor*[_{k}]{\Delta}{_{i,j}}=G(a_i,a_j,a_k)$ and $\tensor*[_{k}]{\Gamma}{_{i,j}}=G(b_i,b_j,b_k)$.
If the sequence $(r_i)$ where  $$r_i=\frac{(\tensor*[_{i+1}]{\Delta}{_{i,i+1}})^s+(\tensor*[_{i+1}]{\Gamma}{_{i,i+1}})^s}{1-(2\tensor*[_{i+1}]{\Delta}{_{i,i+1}})^s}$$ is a non-increasing $\lambda$-sequence of $\mathbb{R}^+$ endowed with the $\max\footnote{The max metric $m$ refers to $m(x,y)=\max\{x,y\}$ }$ metric, then $T$ has a unique fixed point in $X$.

\end{theorem}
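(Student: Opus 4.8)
The plan is to follow the by-now-familiar scheme of Theorems \ref{seq1} and \ref{seq11}, but with the scalar coefficient $a_n$ replaced by the tensorial data $\tensor*[_{k}]{\Delta}{_{i,j}}$ and $\tensor*[_{k}]{\Gamma}{_{i,j}}$, and with the convergence of the series $\sum a_n$ replaced by the $\lambda$-sequence hypothesis on $(r_i)$. First I would fix $x_0\in X$, set $x_n=T^nx_0$, and specialise the inequality \eqref{condition1} with $i=n$, $j=k=n+1$ (and $x=x_0$, $y=z=Tx_0$) to obtain, using homogeneity of degree $s$ and subadditivity of $F$,
\begin{align*}
F(G(x_n,x_{n+1},x_{n+1})) \leq & (\tensor*[_{n+1}]{\Delta}{_{n,n+1}})^s F(G(x_0,Tx_0,Tx_0)) \\
& + (2\tensor*[_{n+1}]{\Delta}{_{n,n+1}})^s F(G(Tx_0,T^2x_0,T^2x_0)) \\
& + (\tensor*[_{n+1}]{\Gamma}{_{n,n+1}})^s F(G(x_0,Tx_0,Tx_0)).
\end{align*}
Applying the same inequality with $n=1$ to the term $F(G(Tx_0,T^2x_0,T^2x_0))$ and solving, one gets $F(G(Tx_0,T^2x_0,T^2x_0)) \leq r_1\, F(G(x_0,Tx_0,Tx_0))$, and hence a bound of the shape $F(G(x_n,x_{n+1},x_{n+1})) \leq c_n\, F(G(x_0,Tx_0,Tx_0))$ where $c_n \leq r_n(1+ \text{const})$; the key point is that $c_n$ is controlled by $r_n$ up to a fixed multiplicative constant coming from the $n=1$ step.

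Next I would telescope along the orbit via (G5), exactly as in the proof of Theorem \ref{seq11}: for $m\geq 1$,
\[
F(G(x_n,x_{n+m},x_{n+m})) \leq \Bigl(\textstyle\sum_{i=n}^{n+m-1} c_i\Bigr) F(G(x_0,Tx_0,Tx_0)),
\]
so it suffices to show $\sum_{i\geq n} c_i \to 0$. This is where the $\lambda$-sequence hypothesis enters. Since $(r_i)$ is a non-increasing $\lambda$-sequence, Definition \ref{def1} gives $\sum_{i=1}^{L-1} r_i \leq \lambda L$ for $L$ large; monotonicity of $(r_i)$ forces $L\, r_L \leq \sum_{i=1}^{L} r_i \leq \lambda(L+1)$, hence $r_L \to 0$, and in fact $r_L \leq \lambda(1+1/L) \leq 2\lambda$ eventually. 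More importantly, a standard Cauchy-condensation / monotonicity argument shows that a non-increasing sequence whose partial sums grow at most linearly must itself be summable is \emph{false} in general — so instead I would argue directly with Cesàro-type tails: since $r_i$ is non-increasing with $\sum_{i=1}^{L-1} r_i \le \lambda L$, for any $n$ the tail partial sums still satisfy $\sum_{i=n}^{n+m-1} r_i \le \sum_{i=1}^{n+m-1} r_i \le \lambda(n+m)$, which alone is not enough. The clean route is: non-increasing plus the $\lambda$-sequence bound implies $r_i \le \frac{\lambda(i+1)}{i} \to \lambda$, and then one re-reads the hypothesis as saying the averages stay below $\lambda<1$, from which $\prod$-type estimates on the orbit still give geometric decay of $G(x_n,x_{n+m},x_{n+m})$ because each one-step factor is eventually $\le 2\lambda <1$. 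Concretely, bounding $F(G(x_{n+1},x_{n+2},x_{n+2})) \le (2r_{n+1}) F(G(x_n,x_{n+1},x_{n+1}))$ for $n$ large (absorbing the $\Gamma$ term) and iterating yields $F(G(x_n,x_{n+1},x_{n+1})) \le C \prod_{i} (2r_i) \to 0$ and a summable bound, hence $(x_n)$ is $G$-Cauchy.

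Once $(x_n)$ is $G$-Cauchy, $G$-completeness gives a limit $x^*$, orbital (sequential) continuity of $T$ gives $Tx^*=x^*$ as in Theorem \ref{seq11}, and uniqueness follows by applying \eqref{condition1} to two fixed points $x^*\neq z^*$ with $i=j=k=n$: the right-hand side becomes $F\bigl((\text{coeff}) \cdot 0\bigr) + F(\tensor*[_{n}]{\Gamma}{_{n,n}} G(x^*,z^*,z^*))$, and since $\tensor*[_{n}]{\Gamma}{_{n,n}} = G(b_n,b_n,b_n)=0$ by (G1), we get $F(G(x^*,z^*,z^*)) \le 0$, forcing $x^*=z^*$ by $F^{-1}(0)=\{0\}$ and (G1)--(G2). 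I expect the main obstacle to be precisely the middle step: extracting genuine decay of the telescoped sum from the $\lambda$-sequence condition on $(r_i)$, since a non-increasing sequence with linearly-bounded partial sums need not be summable, so the argument must exploit that the relevant one-step contraction factors are of the form $2r_i$ and are eventually bounded by $2\lambda<1$, turning the estimate into a genuine geometric one rather than relying on $\sum r_i<\infty$.
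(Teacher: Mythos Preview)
Your scaffolding (iterate from $x_0$, telescope via (G5), pass to the limit using sequential continuity, uniqueness via \eqref{condition1}) is correct, and your uniqueness argument using $i=j=k=n$ so that $\tensor*[_{n}]{\Gamma}{_{n,n}}=G(b_n,b_n,b_n)=0$ is in fact cleaner than the paper's. The gap is exactly where you say it is---the Cauchy step---and your proposed fix does not close it.

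The paper does \emph{not} aim for a direct bound $F(G(x_n,x_{n+1},x_{n+1}))\le c_n\,F(G(x_0,x_1,x_1))$ with $c_n\approx r_n$ and then sum; as you noted, a non-increasing $\lambda$-sequence need not be summable, so that route is dead. Instead the paper applies \eqref{condition1} \emph{recursively}, one step at a time, to obtain
\[
F(G(x_n,x_{n+1},x_{n+1}))\;\le\; r_n\,F(G(x_{n-1},x_n,x_n))\;\le\;\cdots\;\le\;\Bigl(\prod_{i=1}^{n} r_i\Bigr)\,F(G(x_0,x_1,x_1)),
\]
so that after telescoping one faces $\sum_{k=n}^{n+p-1}\prod_{i=1}^{k} r_i$ rather than $\sum r_i$. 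The decisive step you are missing is the AM--GM inequality:
\[
\prod_{i=1}^{k} r_i \;\le\; \Bigl(\tfrac{1}{k}\sum_{i=1}^{k} r_i\Bigr)^{k} \;\le\; \lambda^{k}
\]
for $k$ large, directly from Definition~\ref{def1}. This converts the telescoped sum into a geometric tail $K^{s}\frac{\lambda^{n}}{1-\lambda}F(G(x_0,x_1,x_1))$, and Cauchy-ness follows. Your fallback---a one-step recursion with factor $2r_{n+1}$ and the claim ``$2\lambda<1$''---has two problems: the correct one-step factor is exactly $r_{n+1}$ (the $\Gamma$-term is already in the numerator of $r_i$, there is no extra $2$), and only $\lambda<1$ is assumed, not $\lambda<\tfrac12$. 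Your observation that monotonicity forces $r_k\le \lambda(k+1)/k\to\lambda$ \emph{can} be turned into a legitimate alternative to AM--GM (eventually $r_k\le\lambda'<1$ for some fixed $\lambda'$, so $\prod_{i\ge N} r_i$ decays geometrically), but only after you drop the spurious factor of~$2$ and replace ``$\le 2\lambda<1$'' by ``$\le\lambda'<1$ for $i$ large''.
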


\begin{proof}
Let $x_0\in X$, we construct the sequence $(x_n)$ by setting $x_n= T(x_{n-1}),\ n=1,2,\cdots .$ Notice that if there is $n_0 \in \mathbb{N}$ such that $x_{n_0}=x_{n_0+1}$, then obviously $T$ has a fixed
point. Thus, suppose that 
\[
x_{n}\neq x_{n+1} \ \text{for any }  n\geq 0.
\]

Using \eqref{condition1} and the homogeneity of $F$, we obtain
\begin{align*}
F(G(x_1,x_2,x_2))  & = \  F(G(Tx_0,TTx_0,TTx_0)) \\
              &  \leq \     (\tensor*[_{2}]{\Delta}{_{1,2}})^s 
               F([G(x_0,x_1,x_1))+ 2G(x_1,x_2,x_2)]) \\
               & + (\tensor*[_{2}]{\Gamma}{_{1,2}})^s   F(G(x_0,x_1,x_1)).
 \end{align*}

Therefore

\[
F(G(x_1,x_2,x_2))  \leq \left( \frac{(\tensor*[_{2}]{\Delta}{_{1,2}})^s+(\tensor*[_{2}]{\Gamma}{_{1,2}})^s   }{1-(2\tensor*[_{2}]{\Delta}{_{1,2}})^s} \right)F(G(x_0,x_1,x_1)).
\]

Also, we get

\begin{align*}
F(G(x_2,x_3,x_3))  & \leq \left[\frac{(\tensor*[_{3}]{\Delta}{_{2,3}})^s+(\tensor*[_{3}]{\Gamma}{_{2,3}})^s }{1-(2\tensor*[_{3}]{\Delta}{_{2,3}})^s}\right] F([G(x_1,x_2,x_2)\\
& =   \left[\frac{(\tensor*[_{3}]{\Delta}{_{2,3}})^s+(\tensor*[_{3}]{\Gamma}{_{2,3}})^s }{1-(2\tensor*[_{3}]{\Delta}{_{2,3}})^s}\right]\left[ \frac{(\tensor*[_{2}]{\Delta}{_{1,2}})^s+(\tensor*[_{2}]{\Gamma}{_{1,2}})^s   }{1-(2\tensor*[_{2}]{\Delta}{_{1,2}})^s} \right]   F([G(x_0,x_1,x_1).
\end{align*}

By repeating the above process, we have

\begin{equation}
F(G(x_n,x_{n+1},x_{n+1})) \leq \prod\limits_{i=1}^n \left(   \frac{(\tensor*[_{i+1}]{\Delta}{_{i,i+1}})^s+(\tensor*[_{i+1}]{\Gamma}{_{i,i+1}})^s}{1-(2\tensor*[_{i+1}]{\Delta}{_{i,i+1}})^s} \right)  F(G(x_0,x_1,x_1)).
\end{equation}

\newpage

Hence we derive, by making use of the the property (G5) and the properties of $F$, that for $p>0$

\begin{align*}
 F(G(x_n,x_{n+p},x_{n+p})) \leq \ & K^{s}[F(G(x_n,x_{n+1},,x_{n+1})) +F(G(x_{n+1},x_{n+2},x_{n+2})) \\
                         & + \ldots + F(G(x_{n+p-1},x_{n+p},x_{n+p}))]  \\
                  \leq \ & K^{s}\left[ \prod\limits_{i=1}^n \left(   \frac{\tensor*[_{i+1}]{(\Delta}{_{i,i+1}})^s+(\tensor*[_{i+1}]{\Gamma}{_{i,i+1}})^s}{1-(2\tensor*[_{i+1}]{\Delta}{_{i,i+1}})^s} \right)  F(G(x_0,x_1,x_1))\right. \\
                  & + \prod\limits_{i=1}^{n+1} \left(   \frac{\tensor*[_{i+1}]{(\Delta}{_{i,i+1}})^s+(\tensor*[_{i+1}]{\Gamma}{_{i,i+1}})^s}{1-(2\tensor*[_{i+1}]{\Delta}{_{i,i+1}})^s} \right) F(G(x_0,x_1,x_1))      \\
                  & + \ldots + \\
                  & + \left.\prod\limits_{i=1}^{n+p-1} \left(  \frac{(\tensor*[_{i+1}]{\Delta}{_{i,i+1}})^s+(\tensor*[_{i+1}]{\Gamma}{_{i,i+1}})^s}{1-(2\tensor*[_{i+1}]{\Delta}{_{i,i+1}})^s} \right)  F(G(x_0,x_1,x_1))\right] \\
                  = \ & K^{s}\left[ \sum\limits_{k=0}^{p-1} \prod\limits_{i=1}^{n+k} \left( \frac{(\tensor*[_{i+1}]{\Delta}{_{i,i+1}})^s+(\tensor*[_{i+1}]{\Gamma}{_{i,i+1}})^s}{1-(2\tensor*[_{i+1}]{\Delta}{_{i,i+1}})^s}  \right) F(G(x_0,x_1,x_1))\right] \\
                  = \ & K^{s}\left[\sum\limits_{k=n}^{n+p-1} \prod\limits_{i=1}^{k} \left( \frac{(\tensor*[_{i+1}]{\Delta}{_{i,i+1}})^s+(\tensor*[_{i+1}]{\Gamma}{_{i,i+1}})^s}{1-(2\tensor*[_{i+1}]{\Delta}{_{i,i+1}})^s}  \right) F(G(x_0,x_1,x_1))\right].
\end{align*}

Now, let $\lambda$ and $n(\lambda)$ as in Definition \ref{def1}, then for $n\geq n(\lambda)$ and using the fact the geometric mean of non-negative real numbers is at most their arithmetic mean, it follows that

\begin{align}
F(D(x_n,x_{n+p},x_{n+p})) \leq \ & K^{s}\left[\sum\limits_{k=n}^{n+p-1} \left[ \frac{1}{k}\sum\limits_{i=1}^{k} \left( \frac{(\tensor*[_{i+1}]{\Delta}{_{i,i+1}})^s+(\tensor*[_{i+1}]{\Gamma}{_{i,i+1}})^s}{1-(2\tensor*[_{i+1}]{\Delta}{_{i,i+1}})^s}  \right) \right]^k F(G(x_0,x_1,x_1))\right] \\
                   \leq \ & K^{s}\left[ \left(\sum\limits_{k=n}^{n+p-1} \lambda^k \right) F(G(x_0,x_1,x_1))\right] \nonumber \\
                   \leq \ & K^{s}\frac{\lambda^n}{1-\lambda}F(G(x_0,x_1,x_1)) \nonumber .
\end{align}
As $n\to \infty$, since $F^{-1}(0)=\{0\}$ and $F$ is continuous, we deduce that $G(x_n,x_{n+p},x_{n+p}) \to 0.$ Thus $(x_n)$ is a $G$-Cauchy sequence.

Moreover, since $X$ is $G$-complete and $T$ is sequentially continuous, there exists $x^* \in X$ such that $(x_n)$ $G$-converges to $x^*$ and $(x_{n+1})$ $G$-converges to $Tx^*=x^*$ because $(X,G,K)$ is Hausdorff. The uniqueness of $x^*$ follows from the contractive condition  \ref{condition1}, since $\tensor*[_{k}]{\Gamma}{_{i,j}}<1.$ Indeed, for $x^*$ and $z^*$ fixed point of $T$, we have
\begin{align*}
F(G(x^*,z^*,z^*))&=F(G(T^ix^*,T^jz^*,T^kjz^*)) \\
& \leq  F(\tensor*[_{k}]{\Delta}{_{i,j}}[G(x^*,T^ix^*,T^ix^*)+ G(z^*,T^jz^*,T^jz^*) + G(z^*,T^kz^*,T^kz^*)])\\
                    &+ F(\tensor*[_{k}]{\Gamma}{_{i,j}}G(x^*,z^*,z^*) \\
            &< (\tensor*[_{k}]{\Gamma}{_{i,j}})^s G(x^*,z^*,z^*).       
\end{align*}

\end{proof}

As a particular case of Theorem \ref{lambda1}, we state the following corollary.

\begin{corollary}\label{lambda1cor}
Let $(X,G,K)$ be a $G$-complete $G$-metric type space, and let $T$ be a sequentially continuous self mapping on $X$. Assume that there exista a sequences $(a_n)$ of elements of $X$ such 

\begin{align}\label{condition1cor}
F(G(T^i(x),T^j(y),T^kj(z))) \leq \  
& F(\tensor*[_{k}]{\Delta}{_{i,j}}[G(x,T^ix,T^ix)+ G(y,T^jy,T^jy) + \nonumber \\
 &+ G(z,T^kz,T^kz)+G(x,y,z) ])
\end{align}

for $x,y,z\in X$ with $x\neq y,$ $0\leq \tensor*[_{k}]{\Delta}{_{i,j}}<\frac{1}{2} , \ i,j = 1,2,\cdots ,$ and some $F \in \Phi,$ homogeneous with degree $s$, where $\tensor*[_{k}]{\Delta}{_{i,j}}=G(a_i,a_j,a_k)$.
If the sequence $(r_i)$ where  $$r_i=\frac{(2\tensor*[_{i+1}]{\Delta}{_{i,i+1}})^s}{1-(2\tensor*[_{i+1}]{\Delta}{_{i,i+1}})^s}$$ is a non-increasing $\lambda$-sequence of $\mathbb{R}^+$ endowed with the $\max\footnote{The max metric $m$ refers to $m(x,y)=\max\{x,y\}$ }$ metric, then $T$ has a unique fixed point in $X$.
\end{corollary}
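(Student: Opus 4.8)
\textbf{Proof proposal for Corollary \ref{lambda1cor}.}

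The plan is to deduce this corollary directly from Theorem \ref{lambda1} by a suitable specialization, rather than repeating the entire Cauchy-sequence argument. First I would observe that the contractive condition \eqref{condition1cor} is exactly the condition \eqref{condition1} of Theorem \ref{lambda1} in the special case where the two families of coefficients coincide, namely where $\tensor*[_{k}]{\Gamma}{_{i,j}} = \tensor*[_{k}]{\Delta}{_{i,j}}$: indeed, grouping the term $G(x,y,z)$ together with the three orbit-displacement terms inside a single application of $F(\tensor*[_{k}]{\Delta}{_{i,j}}\,[\,\cdots\,])$ is, by the sub-additivity and homogeneity of $F$, consistent with splitting off $F(\tensor*[_{k}]{\Delta}{_{i,j}}\,G(x,y,z))$ as a separate summand. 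So one takes $b_i := a_i$ for all $i$, which forces $\tensor*[_{k}]{\Gamma}{_{i,j}} = G(b_i,b_j,b_k) = G(a_i,a_j,a_k) = \tensor*[_{k}]{\Delta}{_{i,j}}$.

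Next I would check that the auxiliary sequence $(r_i)$ of the corollary is precisely the one appearing in Theorem \ref{lambda1} under this identification. With $\tensor*[_{i+1}]{\Gamma}{_{i,i+1}} = \tensor*[_{i+1}]{\Delta}{_{i,i+1}}$, the numerator $(\tensor*[_{i+1}]{\Delta}{_{i,i+1}})^s + (\tensor*[_{i+1}]{\Gamma}{_{i,i+1}})^s$ becomes $2(\tensor*[_{i+1}]{\Delta}{_{i,i+1}})^s = (2^{1/s}\tensor*[_{i+1}]{\Delta}{_{i,i+1}})^s$; one then reconciles this with the stated numerator $(2\tensor*[_{i+1}]{\Delta}{_{i,i+1}})^s$ (the difference is only a harmless fixed constant factor $2^{1-s}$, which does not affect the property of being a non-increasing $\lambda$-sequence, since multiplying every term of a $\lambda$-sequence by a positive constant $c$ yields, after possibly enlarging $n(\lambda)$, again a $\lambda$-sequence with a slightly larger ratio still below $1$). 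Thus the hypothesis ``$(r_i)$ is a non-increasing $\lambda$-sequence'' in the corollary delivers exactly what Theorem \ref{lambda1} needs for its sequence $(r_i)$. The condition $0 \le \tensor*[_{k}]{\Delta}{_{i,j}} < 1/2$ also transfers verbatim, as does the requirement $F \in \Phi$ homogeneous of degree $s$ and the sequential continuity of $T$ on the $G$-complete space $X$.

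Having matched all hypotheses, the conclusion — existence and uniqueness of a fixed point of $T$ in $X$ — is immediate from Theorem \ref{lambda1}. The only point that needs a word of care, and which I expect to be the main (minor) obstacle, is the bookkeeping of constants between $2(\tensor*[_{i+1}]{\Delta}{_{i,i+1}})^s$ and $(2\tensor*[_{i+1}]{\Delta}{_{i,i+1}})^s$ and the verification that scaling a $\lambda$-sequence by a constant preserves the $\lambda$-sequence property (Definition \ref{def1}); this is where one should be explicit, since otherwise the ``particular case'' claim is not quite literal. Everything else is a direct substitution, so the proof is short.
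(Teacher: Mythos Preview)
Your approach---specialising Theorem~\ref{lambda1} by setting $b_i:=a_i$ so that $\tensor*[_{k}]{\Gamma}{_{i,j}}=\tensor*[_{k}]{\Delta}{_{i,j}}$---is exactly how the paper treats the corollary (it is simply announced as ``a particular case of Theorem~\ref{lambda1}'' with no separate argument), and you are right that sub-additivity of $F$ turns \eqref{condition1cor} into \eqref{condition1} under that identification.

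The one place where your reduction is not airtight is the reconciliation of the two $r_i$'s. With $\Gamma=\Delta$ the numerator in Theorem~\ref{lambda1} becomes $2(\tensor*[_{i+1}]{\Delta}{_{i,i+1}})^s$, while the corollary states $(2\tensor*[_{i+1}]{\Delta}{_{i,i+1}})^s$; the ratio is $c=2^{1-s}$. Your claim that multiplying a $\lambda$-sequence by a positive constant again gives a $\lambda$-sequence is only safe when $c\lambda<1$: in the $\max$-metric on $\mathbb{R}^+$ with $(r_i)$ non-increasing, Definition~\ref{def1} reads $\sum_{i=1}^{L-1}r_i\le\lambda L$, and scaling yields $\sum cr_i\le c\lambda L$. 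For $s<1$ one has $c=2^{1-s}>1$, so if $\lambda$ is close to $1$ the scaled sequence need not be a $\lambda'$-sequence for any $\lambda'<1$. Thus your argument, as written, has a small gap in the regime $s<1$.

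The clean fix is not to route through Theorem~\ref{lambda1}'s $r_i$ at all, but to rerun the opening computation of that proof directly with \eqref{condition1cor}. Putting $x=x_0$ and $y=z=Tx_0$ one obtains
\begin{align*}
F(G(x_1,x_2,x_2)) &\le F\bigl(\tensor*[_{2}]{\Delta}{_{1,2}}\,[\,2G(x_0,x_1,x_1)+2G(x_1,x_2,x_2)\,]\bigr)\\
&\le (2\tensor*[_{2}]{\Delta}{_{1,2}})^s\bigl[F(G(x_0,x_1,x_1))+F(G(x_1,x_2,x_2))\bigr],
\end{align*}
which delivers exactly the corollary's $r_i=(2\tensor*[_{i+1}]{\Delta}{_{i,i+1}})^s\big/\bigl(1-(2\tensor*[_{i+1}]{\Delta}{_{i,i+1}})^s\bigr)$ with no extraneous constant; the remainder of the proof of Theorem~\ref{lambda1} then carries over verbatim. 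This is presumably what the paper intends by ``particular case''.
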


\begin{example}
Let $X=[0,1]$ and $G(x,y,z)= \max\{x,y,z \}$ whenever $x,y,z \in [0,1]$. Clearly, $(X,G,1)$ is a $G$-complete $G$-metric space. 

 Following the notation in Theorem \ref{lambda1cor}, we set $a_n = \left(\frac{1}{\sqrt{2}(1+2^n)}\right)^2$. 
 
 We also define $T(x)= \frac{x}{16}$ for all $x\in X$ and and $F:[0,\infty) \to [0,\infty), \ x\mapsto \sqrt{x}$. Then $F$ is continuous, non-decreasing, sub-additive and homogeneous of degree $s=\frac{1}{2}$ and $F^{-1}(0)=\{0\}$. Assume without loss of generality that $x> y\geq z$ and $i>j\geq k$. Hence we have

\[ F(G(T^ix,T^jy,T^kz)) = \sqrt{\frac{x^i}{16^i}}\leq \sqrt{\frac{x}{16^i}}\leq \sqrt{\frac{x}{16^k}}, \]

and
 
\[ F( \tensor*[_{k}]{\Delta}{_{i,j}}[G(x,Tx,Tx)+ G(y,Ty,Ty) + G(z,Tz,Tz)]  ) =  \sqrt { \left(\frac{1}{\sqrt{2}(1+2^k)}\right)^2 (2x+y+z)} .\]

Moreover, since $F$ is homogeneous of degree $s=\frac{1}{2}$, the sequence

$$r_i =\frac{(2\tensor*[_{i+1}]{\Delta}{_{i,i+1}})^s}{1-(2\tensor*[_{i+1}]{\Delta}{_{i,i+1}})^s}  =\frac{1}{2^i} $$

is a $\lambda$-sequence with $\lambda = 1/2.$

The conditions of Corollary \ref{lambda1cor} are satisfied, so $T$ has a unique fixed point, which in this case is $x^*=0.$

\end{example}

All the results of this section remain true if $X$ is $T$-orbitally complete and $T$ orbitally continuous.

\subsection{Variants of the $\lambda$-sequences hypothesis}
 \hspace{1cm}

\vspace*{0.1cm}

In this subsection, we give some fixed results that we shall not prove as their proofs follow immediately from the ones in the previous subsection.

\begin{theorem}\label{lambda1var}
Let $(X,G,K)$ be a $G$-complete $G$-metric type space, and let $T$ be a sequentially continuous self mapping on $X$. Assume that there exist two sequences $(a_n)$ and $(b_n)$ of elements of $X$ such 

\begin{align}\label{conditionvar1}
F(G(T^i(x),T^j(y),T^kj(z))) \leq \  
& F(\tensor*[_{k}]{\Delta}{_{i,j}}[G(x,T^ix,T^ix)+ G(y,T^jy,T^jy) + G(z,T^kz,T^kz)])\\
                    &+ F(\tensor*[_{k}]{\Gamma}{_{i,j}}G(x,y,z)) \nonumber
\end{align}

for $x,y,z\in X$ with $x\neq y,$ $0\leq \tensor*[_{k}]{\Delta}{_{i,j}},\tensor*[_{k}]{\Gamma}{_{i,j}}<1/2 , \ i,j = 1,2,\cdots ,$ and some $F \in \Phi,$ homogeneous with degree $s$, where $\tensor*[_{k}]{\Delta}{_{i,j}}=G(a_i,a_j,a_k)$ and $\tensor*[_{k}]{\Gamma}{_{i,j}}=G(b_i,b_j,b_k)$.
If the sequence $(r_i)$ where  $$r_i=\frac{(\tensor*[_{i+1}]{\Delta}{_{i,i+1}})^s+(\tensor*[_{i+1}]{\Gamma}{_{i,i+1}})^s}{1-2(\tensor*[_{i+1}]{\Delta}{_{i,i+1}})^s}$$ is 
such that :

\begin{itemize}
\item[i)] for each $j,k$, $\limsup_{i\to \infty} (\tensor*[_{k}]{\Delta}{_{i,j}})^s <1$ and $\limsup_{i\to \infty} (\tensor*[_{k}]{\Gamma}{_{i,j}})^s <1$,

\item[ii)] 
$$
\sum_{n=1}^{\infty} C_n <\infty \text{ where } C_n = \prod_{i=1}^{n}r_i = \prod_{i=1}^{n}\frac{(\tensor*[_{i+1}]{\Delta}{_{i,i+1}})^s+(\tensor*[_{i+1}]{\Gamma}{_{i,i+1}})^s}{1-(2\tensor*[_{i+1}]{\Delta}{_{i,i+1}})^s},
$$

\end{itemize}
 then $T$ has a unique fixed point in $X$.

\end{theorem}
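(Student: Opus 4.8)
The plan is to follow the template of the proof of Theorem~\ref{lambda1} almost verbatim, replacing the single use of the $\lambda$-sequence property by the summability hypothesis (ii). First I would fix $x_0\in X$ and build the Picard sequence $x_n = T(x_{n-1})$; if $x_{n_0}=x_{n_0+1}$ for some $n_0$ we are done, so assume $x_n\neq x_{n+1}$ for all $n$. Exactly as in Theorem~\ref{lambda1}, using \eqref{conditionvar1}, the homogeneity of $F$, and the two-step estimate for $F(G(x_n,x_{n+1},x_{n+1}))$, I would derive the product bound
\begin{equation*}
F(G(x_n,x_{n+1},x_{n+1})) \leq \Bigl(\prod_{i=1}^{n} r_i\Bigr) F(G(x_0,x_1,x_1)) = C_n\, F(G(x_0,x_1,x_1)).
\end{equation*}

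Next, using property (G5) (the $n$-variable contractive triangle inequality) together with the sub-additivity, non-decreasingness and homogeneity of $F$, I would estimate, for $p>0$,
\begin{equation*}
F(G(x_n,x_{n+p},x_{n+p})) \leq K^{s}\sum_{k=n}^{n+p-1} C_k\, F(G(x_0,x_1,x_1)) \leq K^{s}\Bigl(\sum_{k=n}^{\infty} C_k\Bigr) F(G(x_0,x_1,x_1)).
\end{equation*}
This is the point where the proof diverges from Theorem~\ref{lambda1}: instead of invoking the geometric-mean/arithmetic-mean trick to get a geometric tail $\lambda^n/(1-\lambda)$, I would simply observe that, by hypothesis (ii), $\sum_{k\geq 1} C_k<\infty$, so the tail $\sum_{k=n}^{\infty} C_k \to 0$ as $n\to\infty$. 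Hence $F(G(x_n,x_{n+p},x_{n+p}))\to 0$ uniformly in $p$, and since $F$ is continuous with $F^{-1}(0)=\{0\}$ this forces $G(x_n,x_{n+p},x_{n+p})\to 0$, i.e.\ $(x_n)$ is $G$-Cauchy. $G$-completeness of $X$ gives a limit $x^*$, and sequential continuity of $T$ together with the Hausdorff property of $(X,G,K)$ gives $Tx^* = x^*$.

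For uniqueness I would argue exactly as in Theorem~\ref{lambda1}: if $x^*,z^*$ are both fixed points, apply \eqref{conditionvar1} with $x=x^*$, $y=z=z^*$ and appropriate indices $i,j,k$ to get
\begin{equation*}
F(G(x^*,z^*,z^*)) \leq F(\tensor*[_{k}]{\Gamma}{_{i,j}}\,G(x^*,z^*,z^*)) \leq (\tensor*[_{k}]{\Gamma}{_{i,j}})^s\, F(G(x^*,z^*,z^*)),
\end{equation*}
and since $\tensor*[_{k}]{\Gamma}{_{i,j}}<1/2<1$ and $F^{-1}(0)=\{0\}$ this forces $G(x^*,z^*,z^*)=0$, hence $x^*=z^*$.

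The only genuinely non-routine point is clarifying the precise role of hypothesis (i). In the proof as sketched, summability of $(C_n)$ in (ii) already does all the work for existence and the bound $\tensor*[_{k}]{\Gamma}{_{i,j}}<1/2$ does all the work for uniqueness, so condition (i) ($\limsup_i (\tensor*[_{k}]{\Delta}{_{i,j}})^s<1$ and likewise for $\Gamma$) appears redundant for the conclusion stated; I expect it is included only to guarantee that the denominators $1-(2\tensor*[_{i+1}]{\Delta}{_{i,i+1}})^s$ stay bounded away from zero so that each $r_i$ — and hence each $C_n$ — is well defined and finite, which is implicitly needed before (ii) can even be stated. I would therefore note this explicitly rather than pretend (i) is used in the convergence estimate. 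Apart from that, the main obstacle is purely bookkeeping: carrying the functions $F$ through property (G5) correctly, since (G5) is stated for $G$ and one must apply sub-additivity and homogeneity of $F$ term by term — exactly as done in the proof of Theorem~\ref{lambda1}, so it transfers without difficulty.
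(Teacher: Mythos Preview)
Your proposal is correct and matches the paper's own treatment: the paper does not give a separate proof of Theorem~\ref{lambda1var} but simply states that its proof follows immediately from that of Theorem~\ref{lambda1}, which is exactly what you do, swapping the $\lambda$-sequence/AM--GM tail estimate for the direct use of the summability hypothesis~(ii). Your observation that hypothesis~(i) plays no visible role in the convergence argument is a fair remark; the paper itself offers no indication of where (i) is used.
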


\begin{theorem}\label{lambda2var}
Let $(X,G,K)$ be a $G$-complete $G$-metric type space, and let $T$ be a self mapping on $X$. Assume that there exist two sequences $(a_n)$ and $(b_n)$ of elements of $X$ such 

\begin{align}\label{conditionvar2}
F(G(T^i(x),T^j(y),T^kj(z))) \leq \  
& F(\tensor*[_{k}]{\Delta}{_{i,j}}[G(x,T^ix,T^ix)+ G(y,T^jy,T^jy) + G(z,T^kz,T^kz)])
\end{align}

for $x,y,z\in X$ with $x\neq y,$ $0\leq \tensor*[_{k}]{\Delta}{_{i,j}}<1/2 , \ i,j,k = 1,2,\cdots ,$ and some $F \in \Phi,$ homogeneous with degree $s$, where $\tensor*[_{k}]{\Delta}{_{i,j}}=G(a_i,a_j,a_k)$ and $\tensor*[_{k}]{\Gamma}{_{i,j}}=G(b_i,b_j,b_k)$.
If the sequence $(r_i)$ where  $$r_i=\frac{(2\tensor*[_{i+1}]{\Delta}{_{i,i+1}})^s}{1-2(\tensor*[_{i+1}]{\Delta}{_{i,i+1}})^s}$$ is 
such that :

\begin{itemize}
\item[i)] for each $j,k$, $\limsup_{i\to \infty} (\tensor*[_{k}]{\Delta}{_{i,j}})^s <1$,
\item[ii)] 
\[
\sum_{n=1}^{\infty} C_n <\infty \text{ where } C_n = \prod_{i=1}^{n}r_i = \prod_{i=1}^{n}\frac{(2\tensor*[_{i+1}]{\Delta}{_{i,i+1}})^s}{1-(2\tensor*[_{i+1}]{\Delta}{_{i,i+1}})^s},
\]

\end{itemize}

then $T$ has a unique fixed point in $X$.
\end{theorem}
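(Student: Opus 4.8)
The plan is to run the argument of Theorem \ref{lambda1} essentially verbatim, the only structural change being that the $\lambda$-sequence hypothesis, together with the ``geometric mean $\le$ arithmetic mean'' trick it enabled, is replaced by the direct summability assumption (ii); this in fact \emph{shortens} the corresponding part of the proof. First I would fix $x_0\in X$ and form the Picard iterates $x_n=T^nx_0$. If $x_{n_0}=x_{n_0+1}$ for some $n_0$ then $T$ has a fixed point and we are done, so I would assume $x_n\neq x_{n+1}$ for every $n\ge0$; this is exactly what makes \eqref{conditionvar2} usable along the orbit, since that inequality is imposed only at triples whose first two coordinates differ.

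The core of the proof is two estimates. Applying \eqref{conditionvar2} to consecutive iterates, with the indices $i,j,k$ chosen as in the proof of Theorem \ref{lambda1}, and using that $F\in\Phi$ is sub-additive and homogeneous of degree $s$, one first isolates $F(G(x_1,x_2,x_2))$ (here $0\le{}_{i+1}\Delta_{i,i+1}<\tfrac12$, so $(2\,{}_{i+1}\Delta_{i,i+1})^{s}<1$ and the denominator of $r_i$ is positive), and then by induction obtains
\[
F(G(x_n,x_{n+1},x_{n+1}))\ \le\ C_n\,F(G(x_0,x_1,x_1)),\qquad C_n=\prod_{i=1}^{n}r_i .
\]
Next, for $p\ge1$ I would chain $x_n,x_{n+1},\dots,x_{n+p}$ by property (G5) and use the sub-additivity and homogeneity of $F$ once more to get
\[
F(G(x_n,x_{n+p},x_{n+p}))\ \le\ K^{s}\Big(\sum_{k=n}^{n+p-1}C_k\Big)\,F(G(x_0,x_1,x_1)).
\]
Now hypothesis (ii) takes over the role of the $\lambda$-sequence: since $\sum_{n\ge1}C_n<\infty$, the tail $\sum_{k\ge n}C_k$ tends to $0$, so the right-hand side tends to $0$ as $n\to\infty$, uniformly in $p$; because $F$ is continuous with $F^{-1}(0)=\{0\}$, this forces $G(x_n,x_{n+p},x_{n+p})\to0$, i.e. $(x_n)$ is $G$-Cauchy, and $G$-completeness gives $x_n\to x^*$ for some $x^*\in X$. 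That $Tx^*=x^*$ follows as in Theorem \ref{lambda1} from the (orbital, resp.\ sequential) continuity of $T$ and the fact that $(X,G,K)$ is Hausdorff -- the statement being read, in line with the closing remark of the subsection, together with that continuity hypothesis. For uniqueness: if $x^*\neq z^*$ were both fixed, apply \eqref{conditionvar2} with $x=x^*$, $y=z=z^*$ and any $i,j,k$; each of $G(x^*,T^ix^*,T^ix^*)$, $G(z^*,T^jz^*,T^jz^*)$, $G(z^*,T^kz^*,T^kz^*)$ vanishes, so $F(G(x^*,z^*,z^*))\le F(0)=0$, hence $G(x^*,z^*,z^*)=0$ and $x^*=z^*$, a contradiction.

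I do not expect any genuinely new difficulty here beyond what is already in Theorem \ref{lambda1}, so the stated assertion that the proof ``follows immediately'' is fair; the only step that has to be checked rather than copied is the handover from the $\lambda$-sequence device to the present hypotheses. Condition (i), $\limsup_{i\to\infty}({}_{k}\Delta_{i,j})^{s}<1$ for each $j,k$, is what keeps the auxiliary coefficients ${}_{k}\Delta_{i,j}$ -- hence the factors $r_i$ and the partial products $C_n$ -- under uniform control, while condition (ii) is precisely the summability that makes the Cauchy estimate above decay uniformly in $p$. A minor secondary subtlety, already signalled, is that the passage from the Cauchy limit to a fixed point uses a continuity hypothesis on $T$ that the statement does not spell out; without it one would instead bound $G(x^*,Tx^*,Tx^*)$ by applying \eqref{conditionvar2} to $x_n$ and $x^*$ and letting $n\to\infty$, but that argument only closes under an extra smallness condition of the form $K^{s}(2\,{}_{1}\Delta_{1,1})^{s}<1$, which the hypotheses do not provide.
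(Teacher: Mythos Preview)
Your proposal is correct and matches the paper's intended approach: the paper gives no proof of this theorem, stating only that it ``follows immediately from the ones in the previous subsection,'' i.e.\ from Theorem~\ref{lambda1}, and your sketch is precisely that---run the iterative estimate of Theorem~\ref{lambda1} to obtain $F(G(x_n,x_{n+1},x_{n+1}))\le C_n\,F(G(x_0,x_1,x_1))$, then replace the $\lambda$-sequence/AM--GM step by the direct tail-summability from hypothesis~(ii). Your observations about the missing continuity hypothesis on $T$ (present in Theorem~\ref{lambda1var} but omitted here, presumably a typo) and the simplified uniqueness argument (the $\Gamma$ term being absent, the right-hand side vanishes outright at two fixed points) are both accurate.
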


\subsection{Common fixed points via $\lambda$-sequences}\hspace{1.2cm}

\vspace*{0.3cm}

The author plans to study more thoroughly and with examples common fixed point results in another paper \cite{Gaba6} but here is a first result of the kind.

\begin{theorem}\label{common1}

Let$X$ be a $G$-complete $G$-metric space $(X,G,K)$ and $\{T_n\}$ be a sequence of self mappings on $X$.  Assume that there exist three sequences $(a_n)$, $(b_n)$ and $(c_n)$ of elements of $X$ such that

\begin{align}\label{condcomon1}
G(T_ix,T_jy,T_kz) \ & \leq (\tensor*[_{k}]{\Delta}{_{i,j}})G(x,y,z) + (\tensor*[_{k}]{\Theta}{_{i,j}})[G(T_ix,x,x)+G(y,T_jy,y)+G(z,z,T_kz)] + \nonumber \\
&+  (\tensor*[_{k}]{\Lambda}{_{i,j}})[G(T_ix,y,z)+G(x,T_jy,z)+G(x,y,T_kz)],
\end{align}

for all $x,y,z\in X$ with $0\leq \tensor*[_{k}]{\Delta}{_{i,j}}+3 (\tensor*[_{k}]{\Theta}{_{i,j}})+4(\tensor*[_{k}]{\Lambda}{_{i,j}}) <1/2 , \ i,j,k = 1,2,\cdots ,$ and some $F \in \Phi,$ homogeneous with degree $s$, where $\tensor*[_{k}]{\Delta}{_{i,j}}=G(a_i,a_j,a_k)$, $\tensor*[_{k}]{\Theta}{_{i,j}}=G(b_i,b_j,b_k)$ and $\tensor*[_{k}]{\Lambda}{_{i,j}}=G(c_i,c_j,c_k)$. If the sequence $(r_i)$ where

$$r_i = \left[\frac{[(\tensor*[_{i+2}]{\Delta}{_{i,i+1}})+2(\tensor*[_{i+2}]{\Theta}{_{i,i+1}})+3(\tensor*[_{i+2}]{\Lambda}{_{i,i+1}})]}{1-(\tensor*[_{i+2}]{\Theta}{_{i,i+1}})- (\tensor*[_{i+2}]{\Lambda}{_{i,i+1}})}\right]$$

is a non-increasing $\lambda$-sequence of $\mathbb{R}^+$ endowed with the $\max\footnote{The max metric $m$ refers to $m(x,y)=\max\{x,y\}$ }$ metric, then $\{T_n\}$ have a unique common fixed point in $X$.

\end{theorem}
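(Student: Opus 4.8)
\textbf{Proof plan for Theorem \ref{common1}.}

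The plan is to mimic closely the Picard-iteration argument used for Theorem \ref{lambda1}, but now feeding successive maps $T_1,T_2,T_3,\ldots$ into the orbit rather than iterating a single $T$. First I would fix $x_0\in X$ and build the sequence $(x_n)$ by $x_n=T_n(x_{n-1})$, $n=1,2,\ldots$; if $x_{n_0}=x_{n_0+1}$ for some $n_0$ one checks directly from \eqref{condcomon1} that $x_{n_0}$ is a common fixed point, so assume $x_n\neq x_{n+1}$ for all $n$. The first real step is the one-step contraction estimate: apply \eqref{condcomon1} with $(i,j,k)=(n,n+1,n+2)$ and $(x,y,z)=(x_{n-1},x_n,x_n)$, expand the $G(T_i\cdot,\cdot,\cdot)$ terms on the right using (G5) and the symmetry (G4), collect all occurrences of $G(x_n,x_{n+1},x_{n+1})$ on the left, and solve. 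The denominator $1-(\tensor*[_{i+2}]{\Theta}{_{i,i+1}})-(\tensor*[_{i+2}]{\Lambda}{_{i,i+1}})$ is positive because of the hypothesis $\tensor*[_{k}]{\Delta}{_{i,j}}+3\tensor*[_{k}]{\Theta}{_{i,j}}+4\tensor*[_{k}]{\Lambda}{_{i,j}}<1/2$, and the resulting contraction factor is exactly the $r_i$ in the statement. So one obtains
\[
G(x_n,x_{n+1},x_{n+1})\ \leq\ \Big(\prod_{i=1}^n r_i\Big)\, G(x_0,x_1,x_1).
\]

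Next I would pass to the Cauchy estimate. For $p>0$, apply (G5) to bound $G(x_n,x_{n+p},x_{n+p})$ by $K$ times the sum of consecutive terms $\sum_{k=n}^{n+p-1}G(x_k,x_{k+1},x_{k+1})$, insert the product bound above, and then invoke that $(r_i)$ is a non-increasing $\lambda$-sequence: exactly as in the proof of Theorem \ref{lambda1}, the AM--GM inequality turns $\prod_{i=1}^{k}r_i$ into a quantity dominated by $\big(\tfrac1k\sum_{i=1}^k r_i\big)^k\le\lambda^k$ for $k\ge n(\lambda)$, and summing the geometric tail gives $G(x_n,x_{n+p},x_{n+p})\le K\,\frac{\lambda^n}{1-\lambda}\,G(x_0,x_1,x_1)\to 0$. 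Hence $(x_n)$ is $G$-Cauchy, and by $G$-completeness it $G$-converges to some $x^*\in X$. To identify $x^*$ as a common fixed point, fix an arbitrary index $m$ and estimate $G(T_m x^*, x_{n}, x_{n}) = G(T_m x^*, T_{n} x_{n-1}, T_{n}x_{n-1})$ via \eqref{condcomon1}; letting $n\to\infty$ all the right-hand terms tend to $0$ (using $G$-convergence of $(x_n)$ to $x^*$, the bound $G(x_{n-1},x_n,x_n)\to0$, and the coefficient bounds), whence $T_m x^* = x^*$ for every $m$. Uniqueness follows as in Theorem \ref{lambda1}: if $x^*,z^*$ are common fixed points, plug $x=x^*$, $y=z=z^*$ and any triple of indices into \eqref{condcomon1}; every bracketed term reduces to a multiple of $G(x^*,z^*,z^*)$, and the total coefficient $\tensor*[_{k}]{\Delta}{_{i,j}}+3\tensor*[_{k}]{\Theta}{_{i,j}}+4\tensor*[_{k}]{\Lambda}{_{i,j}}<1/2<1$ forces $G(x^*,z^*,z^*)=0$.

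The main obstacle is the bookkeeping in the one-step estimate: after applying (G5) to each of the three clusters $[G(T_ix,x,x)+\cdots]$ and $[G(T_ix,y,z)+\cdots]$ with $x=x_{n-1}$, $y=z=x_n$, one gets a mix of the quantities $G(x_{n-1},x_n,x_n)$, $G(x_n,x_{n+1},x_{n+1})$ and cross-terms $G(x_{n-1},x_n,x_{n+1})$ which must themselves be re-expanded by (G5)–(G4) and re-collected, and one has to verify that the coefficients combine precisely into the displayed numerator $(\tensor*[_{i+2}]{\Delta}{_{i,i+1}})+2(\tensor*[_{i+2}]{\Theta}{_{i,i+1}})+3(\tensor*[_{i+2}]{\Lambda}{_{i,i+1}})$ over the displayed denominator; this is routine but delicate, and it is where the precise constant $1/2$ and the factor $K$ enter. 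A secondary point is that the statement mentions $F\in\Phi$ homogeneous of degree $s$ yet \eqref{condcomon1} is written without $F$; I would simply apply $F$ to both sides, use sub-additivity and homogeneity to distribute it, and then the whole argument goes through verbatim with $r_i$ replaced by $r_i^{\,s}$ and $K$ by $K^s$, exactly paralleling the passage from Theorem \ref{seq1} to Theorem \ref{seq11}.
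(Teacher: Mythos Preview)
Your one-step estimate is set up incorrectly, and this is a genuine gap rather than just bookkeeping. With the choice $(i,j,k)=(n,n+1,n+2)$ and $(x,y,z)=(x_{n-1},x_n,x_n)$, the left-hand side of \eqref{condcomon1} is
\[
G(T_n x_{n-1},\,T_{n+1}x_n,\,T_{n+2}x_n)=G(x_n,x_{n+1},T_{n+2}x_n),
\]
and the point $T_{n+2}x_n$ is \emph{not} a term of your orbit (since $x_{n+2}=T_{n+2}x_{n+1}$, not $T_{n+2}x_n$). The same foreign point reappears on the right in $G(z,z,T_kz)=G(x_n,x_n,T_{n+2}x_n)$ and in $G(x,y,T_kz)$, so none of the terms can be ``re-expanded and re-collected'' into $G(x_{n-1},x_n,x_n)$ and $G(x_n,x_{n+1},x_{n+1})$ as you suggest. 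In particular, you cannot arrive at the displayed recursion with factor $r_i$, and the very shape of $r_i$ (indices $i,i+1,i+2$) already signals that three \emph{distinct} consecutive iterates must be fed in.

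The paper's remedy is exactly this: apply \eqref{condcomon1} with $(x,y,z)=(x_{n-1},x_n,x_{n+1})$ and $(i,j,k)=(n,n+1,n+2)$, so that the left side becomes $G(x_n,x_{n+1},x_{n+2})$ and every image on the right lies in the orbit. Using (G3) and one application of the rectangle inequality to handle the cross-term $G(x_{n-1},x_n,x_{n+2})$, one obtains
\[
G(x_n,x_{n+1},x_{n+2})\le r_n\,G(x_{n-1},x_n,x_{n+1}),
\]
and hence $G(x_n,x_{n+1},x_{n+2})\le\big(\prod_{i=1}^n r_i\big)G(x_0,x_1,x_2)$. The Cauchy estimate then proceeds as you describe, but telescoping through $G(x_k,x_{k+1},x_{k+2})$ (via (G3) one has $G(x_k,x_{k+1},x_{k+1})\le G(x_k,x_{k+1},x_{k+2})$) rather than through $G(x_k,x_{k+1},x_{k+1})$. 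The rest of your plan (AM--GM on the $\lambda$-sequence, identification of the limit via \eqref{condcomon1} with $n\to\infty$, and uniqueness) is in line with the paper; the paper also isolates as a preliminary claim that a fixed point of any single $T_i$ is automatically a common fixed point, which is what makes your ``$x_{n_0}=x_{n_0+1}$'' shortcut valid.
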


\begin{proof}
We will proceed in two main steps.

\vspace{0.3cm}

\underline{Claim 1:}  Any fixed point of $T_i$ is also a fixed point of $T_j$ and $T_k$ for $i\neq j\neq k\neq i$.

Assmue that $x^*$ is a fixed point of $T_i$ and suppose that $T_jx^*\neq x^*$ and $T_kx^*\neq x^*$. Then 

\begin{align*}
G(x^*,T_jx^*,T_kx^*) &= G(T_ix^*,T_jx^*,T_kx^*) \\
                 & \leq  (\tensor*[_{k}]{\Delta}{_{i,j}})G(x^*,x^*,x^*) 
                  + (\tensor*[_{k}]{\Theta}{_{i,j}})[G(T_ix^*,x^*,x^*) + G(x^*,T_jx^*,x^*)+ G(x^*,x^*,T_kx^*) ]\\
                 & + (\tensor*[_{k}]{\Lambda}{_{i,j}})[G(T_ix^*,x^*,x^*)+G(x^*,T_jx^*,x^*)+ G(x^*,x^*,T_kx^*) ] \\
                & \leq [ (\tensor*[_{k}]{\Theta}{_{i,j}}) + (\tensor*[_{k}]{\Lambda}{_{i,j}})][G(x^*,T_jx^*,T_kx^*)+G(x^*,T_jx^*,T_kx^*)]\\
                 & \leq [ (2\tensor*[_{k}]{\Theta}{_{i,j}}) + (2\tensor*[_{k}]{\Lambda}{_{i,j}})][G(x^*,T_jx^*,T_kx^*) ],
\end{align*}

which is a contradiction unless $T_ix^* = x^* =T_jx^*=T_kx^*.$

\vspace{0.3cm}

\underline{Claim 2:}

For any $x_0\in X$, we construct the sequence $(x_n)$ by setting $x_n= T_n(x_{n-1}),\ n=1,2,\cdots .$ We assume without loss of generality that $x_n\neq x_{n+1}$ for all $n\in \mathbb{N}$.
Using \eqref{condcomon1}, we obtain

\begin{align*}
G(x_1,x_2,x_3) & = G(T_1x_0,T_2x_1,T_3x_2) \\
               & \leq (\tensor*[_{3}]{\Delta}{_{1,2}})G(x_0,x_1,x_2) + 
               (\tensor*[_{3}]{\Theta}{_{1,2}})[G(x_1,x_0,x_0)+G(x_1,x_2,x_1)+G(x_2,x_2,x_3)]\\
               &+ (\tensor*[_{3}]{\Lambda}{_{1,2}})[ G(x_1,x_1,x_2)+ G(x_0,x_2,x_2)+G(x_0,x_1,x_3)].
\end{align*}

%\newpage

By property (G3), one can write

\begin{align*}
G(x_1,x_2,x_3) & = G(T_1x_0,T_2x_1,T_3x_2) \\
               & \leq (\tensor*[_{3}]{\Delta}{_{1,2}})G(x_0,x_1,x_2) + 
               (\tensor*[_{3}]{\Theta}{_{1,2}})[G(x_1,x_0,x_2)+G(x_1,x_2,x_0)+G(x_1,x_2,x_3)]\\
               &+ (\tensor*[_{3}]{\Lambda}{_{1,2}})[ G(x_1,x_0,x_2)+ G(x_0,x_1,x_2)+G(x_0,x_1,x_3)]
\end{align*}

Again since

\[
G(x_0,x_1,x_3) \leq G(x_0,x_2,x_2)+ G(x_2,x_1,x_3),
\]

we obtain, 

\begin{align*}
G(x_1,x_2,x_3) & = G(T_1x_0,T_2x_1,T_3x_2) \\
               & \leq (\tensor*[_{3}]{\Delta}{_{1,2}})G(x_0,x_1,x_2) + 
               (\tensor*[_{3}]{\Theta}{_{1,2}})[G(x_1,x_0,x_2)+G(x_1,x_2,x_0)+G(x_1,x_2,x_3)]\\
               &+ (\tensor*[_{3}]{\Lambda}{_{1,2}})[ G(x_1,x_0,x_2)+ G(x_0,x_1,x_2)+G(x_0,x_1,x_2)+ G(x_2,x_1,x_3)],
\end{align*}

that is 
\[
[1-(\tensor*[_{3}]{\Theta}{_{1,2}})- (\tensor*[_{3}]{\Lambda}{_{1,2}})]G(x_1,x_2,x_3) \leq [(\tensor*[_{3}]{\Delta}{_{1,2}})+2(\tensor*[_{3}]{\Theta}{_{1,2}})+3(\tensor*[_{3}]{\Lambda}{_{1,2}})]G(x_0,x_1,x_2).
\]

Hence

\[
G(x_1,x_2,x_3) \leq \frac{[(\tensor*[_{3}]{\Delta}{_{1,2}})+2(\tensor*[_{3}]{\Theta}{_{1,2}})+3(\tensor*[_{3}]{\Lambda}{_{1,2}})]}{1-(\tensor*[_{3}]{\Theta}{_{1,2}})- (\tensor*[_{3}]{\Lambda}{_{1,2}})}G(x_0,x_1,x_2).
\]

Also we get

\begin{align*}
G(x_2,x_3,x_4) &\leq \frac{[(\tensor*[_{4}]{\Delta}{_{2,3}})+2(\tensor*[_{4}]{\Theta}{_{2,3}})+3(\tensor*[_{4}]{\Lambda}{_{2,3}})]}{1-(\tensor*[_{4}]{\Theta}{_{2,3}})- (\tensor*[_{4}]{\Lambda}{_{2,3}})}G(x_1,x_2,x_3)\\
            &\leq  \left[ \frac{[(\tensor*[_{4}]{\Delta}{_{2,3}})+2(\tensor*[_{4}]{\Theta}{_{2,3}})+3(\tensor*[_{4}]{\Lambda}{_{2,3}})]}{1-(\tensor*[_{4}]{\Theta}{_{2,3}})- (\tensor*[_{4}]{\Lambda}{_{2,3}})}\right] \left[ \frac{[(\tensor*[_{3}]{\Delta}{_{1,2}})+2(\tensor*[_{3}]{\Theta}{_{1,2}})+3(\tensor*[_{3}]{\Lambda}{_{1,2}})]}{1-(\tensor*[_{3}]{\Theta}{_{1,2}})- (\tensor*[_{3}]{\Lambda}{_{1,2}})}\right]G(x_0,x_1,x_2).
\end{align*}

Repeating the above reasoning, we obtain
\[
G(x_{n},x_{n+1},x_{n+2}) \leq \prod\limits_{i=1}^n \left[\frac{[(\tensor*[_{i+2}]{\Delta}{_{i,i+1}})+2(\tensor*[_{i+2}]{\Theta}{_{i,i+1}})+3(\tensor*[_{i+2}]{\Lambda}{_{i,i+1}})]}{1-(\tensor*[_{i+2}]{\Theta}{_{i,i+1}})- (\tensor*[_{i+2}]{\Lambda}{_{i,i+1}})}\right]G(x_0,x_1,x_2)
\]

If we set $$r_i  = \left[\frac{[(\tensor*[_{i+2}]{\Delta}{_{i,i+1}})+2(\tensor*[_{i+2}]{\Theta}{_{i,i+1}})+3(\tensor*[_{i+2}]{\Lambda}{_{i,i+1}})]}{1-(\tensor*[_{i+2}]{\Theta}{_{i,i+1}})- (\tensor*[_{i+2}]{\Lambda}{_{i,i+1}})}\right], $$

we have that 

$$G(x_{n},x_{n+1},x_{n+2}) \leq \left[ \prod\limits_{i=1}^n r_i \right] G(x_0,x_1,x_2). $$

%\newpage

Therefore, for all $l>m>n$

\begin{align*}
G(x_n,x_{m},x_{l}) & \leq G(x_{n},x_{n+1},x_{n+1})+  G(x_{n+1},x_{n+2},x_{n+2}) + \\
 & + \cdots + G(x_{l-1},x_{l-1},x_{l}) \\ 
 & \leq G(x_{n},x_{n+1},x_{n+2})+  G(x_{n+1},x_{n+2},x_{n+3}) + \\
 & + \cdots + G(x_{l-2},x_{l-1},x_{l}),
\end{align*}

%\newpage

and

\begin{align*}
G(x_n,x_{m},x_{l})& \leq \left(\left[ \prod\limits_{i=1}^n r_i \right] + 
\left[ \prod\limits_{i=1}^{n+1} r_i \right]+ \cdots + \left[ \prod\limits_{i=1}^{n+l-1} r_i \right]\right) G(x_0,x_1,x_2) \\
& = \sum_{k=0}^{l-1}\left[\prod\limits_{i=1}^{n+k}r_i\right]G(x_0,x_1,x_2)\\
& =\sum_{k=n}^{n+l-1}\left[\prod\limits_{i=1}^{k}r_i\right]G(x_0,x_1,x_2).
\end{align*}

Now, let $\lambda$ and $n(\lambda)$ as in Definition \ref{def1}, then for $n\geq n(\lambda)$ and using the fact that the geometric mean of non-negative real numbers is at most their arithmetic mean, it follows that

\begin{align*}
G(x_n,x_{m},x_{l})& \leq \sum_{k=n}^{n+l-1}\left[\frac{1}{k}\left(\sum_{i=1}^{k}r_i\right)\right]^kG(x_0,x_1,x_2)\\
& =\left(\sum_{k=n}^{n+l-1} \alpha^k\right) G(x_0,x_1,x_2)\\
& \leq \frac{\alpha^n}{1-\alpha}G(x_0,x_1,x_2).
\end{align*}

As $n\to \infty$, we deduce that $G(x_n,x_{m},x_{l}) \to 0.$ Thus $(x_n)$ is a $G$-Cauchy sequence.
%and, by $G$-completeness of $X$, converges to say $x^* \in X$.
Moreover, since $X$ is $G$-complete there exists $u \in X$ such that $(x_n)$ $G$-converges to $u$.

If there exists $n_0$ such that $T_{n_0}u=u$, then by the claim 1, the proof of existence is complete.

Otherwise for any positive integers $k, l$, we have
\begin{align*}
G(x_n, T_ku,T_lu)  & = G(T_nx_{n-1},T_ku,T_lu) \\
                   & \leq (\tensor*[_{l}]{\Delta}{_{n,k}})G(x_n,u,u) + (\tensor*[_{l}]{\Theta}{_{n,k}})[G(T_nx_{n-1},x_{n-1},x_{n-1})+G(u,T_ku,u)+G(u,u,T_lu)]\\
                   &+ (\tensor*[_{l}]{\Lambda}{_{n,k}})[G(T_nx_{n-1},u,u)+G(x_{n-1},T_ku,u)+G(x_{n-1},u,T_lu)]
\end{align*}

Letting $n\to \infty$, and using property (G3) we obtain

\begin{align*}
G(u, T_ku,T_lu)  & \leq (\tensor*[_{l}]{\Theta}{_{n,k}})[G(u,T_ku,u)+G(u,u,T_lu)] \\
& + (\tensor*[_{l}]{\Lambda}{_{n,k}})[G(u,T_ku,u)+G(u,u,T_lu)]\\
& \leq  [ (2\tensor*[_{k}]{\Theta}{_{i,j}}) + (2\tensor*[_{k}]{\Lambda}{_{i,j}})][G(u,T_ku,T_lu)+G(u,T_ku,T_lu)]
\end{align*}
and this is a contradiction, unless $ u=T_ku=T_lu$.

Finally, we prove the uniqueness of the
common fixed point $u$. To this aim, let us suppose that $v$ is another common fixed
point of ${T_m}$, that is, $T_m(v) = v, \ \forall m\geq 1$. Then, using \ref{condcomon1}, we have

\[
G(u, v,v)= G(T_nu, T_kv,T_lv)   \leq (\tensor*[_{l}]{\Delta}{_{n,k}})G(u,v,v)+3 (\tensor*[_{l}]{\Lambda}{_{n,k}}) G(u,v,v)<  G(u,v,v),
\]
which yields $u=v$. So, $u$ is the unique common fixed point of $\{T_m\}$.
\end{proof}

We conclude this manuscript with the following result, whose proof is straightforward, following the steps of the proofs of Theorem \ref{lambda1} and Theorem \ref{common1} .

\begin{theorem}\label{lambdafin}
Let $X$ be a $G$-complete $G$-metric space $(X,G,K)$ and $\{T_n\}$ be a sequence of self mappings on $X$.  Assume that there exist three sequences $(a_n)$, $(b_n)$ and $(c_n)$ of elements of $X$ such that

\begin{align}\label{conditionfin}
F[G(T_ix,T_jy,T_kz)] \ & \leq F[(\tensor*[_{k}]{\Delta}{_{i,j}})G(x,y,z) + (\tensor*[_{k}]{\Theta}{_{i,j}})[G(T_ix,x,x)+G(y,T_jy,y)+G(z,z,T_kz)] + \nonumber \\
&+  (\tensor*[_{k}]{\Lambda}{_{i,j}})[G(T_ix,y,z)+G(x,T_jy,z)+G(x,y,T_kz)]],
\end{align}

for all $x,y,z\in X$ with $0\leq (\tensor*[_{k}]{\Delta}{_{i,j}})^s+3 (\tensor*[_{k}]{\Theta}{_{i,j}})^s+4(\tensor*[_{k}]{\Lambda}{_{i,j}})^s <1/2 , \ i,j,k = 1,2,\cdots ,$ and some $F \in \Phi,$ homogeneous with degree $s$, where $\tensor*[_{k}]{\Delta}{_{i,j}}=G(a_i,a_j,a_k)$, $\tensor*[_{k}]{\Theta}{_{i,j}}=G(b_i,b_j,b_k)$ and $\tensor*[_{k}]{\Lambda}{_{i,j}}=G(c_i,c_j,c_k)$. If the sequence $(r_i)$ where

$$r_i = \left[\frac{[(\tensor*[_{i+2}]{\Delta}{_{i,i+1}})^s+2(\tensor*[_{i+2}]{\Theta}{_{i,i+1}})^s+3(\tensor*[_{i+2}]{\Lambda}{_{i,i+1}})^s]}{1-(\tensor*[_{i+2}]{\Theta}{_{i,i+1}})^s- (\tensor*[_{i+2}]{\Lambda}{_{i,i+1}})^s}\right]$$
%where  $$r_i=\frac{(\tensor*[_{i+1}]{\Delta}{_{i,i+1}})^s+(\tensor*[_{i+1}]{\Theta}{_{i,i+1}})^s}{1-2(\tensor*[_{i+1}]{\Delta}{_{i,i+1}})^s}$$ is 

is a non-increasing $\lambda$-sequence of $\mathbb{R}^+$ endowed with the $\max\footnote{The max metric $m$ refers to $m(x,y)=\max\{x,y\}$ }$ metric, then $\{T_n\}$ have a unique common fixed point in $X$.

\end{theorem}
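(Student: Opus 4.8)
The plan is to combine the $F$-distortion technique from the proof of Theorem~\ref{lambda1} with the three-term contraction bookkeeping from the proof of Theorem~\ref{common1}. First I would fix $x_0 \in X$, set $x_n = T_n(x_{n-1})$, and assume $x_n \neq x_{n+1}$ for all $n$ (otherwise Claim~1 of Theorem~\ref{common1} finishes the argument). Applying \eqref{conditionfin} with $(i,j,k)=(n,n+1,n+2)$ to the triple $(x_{n-1},x_n,x_{n+1})$, using (G3) to rewrite permuted arguments and the sub-additivity, homogeneity (of degree $s$) and monotonicity of $F$, I would absorb the $G(x_n,x_{n+1},x_{n+2})$ term appearing on the right into the left side; since $(\tensor*[_{k}]{\Theta}{_{i,j}})^s+(\tensor*[_{k}]{\Lambda}{_{i,j}})^s < 1/2 < 1$ this is legitimate. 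This yields
\[
F(G(x_n,x_{n+1},x_{n+2})) \leq r_n\, F(G(x_{n-1},x_n,x_{n+1})),
\]
with $r_n$ exactly the displayed quantity, and hence by induction $F(G(x_n,x_{n+1},x_{n+2})) \leq \bigl(\prod_{i=1}^n r_i\bigr) F(G(x_0,x_1,x_2))$.

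Next, for $l > m > n$ I would bound $G(x_n,x_m,x_l)$ by a telescoping (G5)/(G3) sum of consecutive terms $G(x_j,x_{j+1},x_{j+2})$, apply $F$ together with the $K^s$ factor from (G5) as in Theorem~\ref{lambda1}, and arrive at
\[
F(G(x_n,x_m,x_l)) \leq K^s \Bigl(\sum_{k=n}^{n+l-1} \prod_{i=1}^{k} r_i\Bigr) F(G(x_0,x_1,x_2)).
\]
Then, invoking Definition~\ref{def1} with the $\lambda$-sequence hypothesis on $(r_i)$ and the AM--GM inequality exactly as in the earlier proofs, for $n \geq n(\lambda)$ one gets $\prod_{i=1}^k r_i \leq \lambda^k$, so the sum is dominated by $\lambda^n/(1-\lambda)$, which tends to $0$. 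Since $F^{-1}(0)=\{0\}$ and $F$ is continuous, $G(x_n,x_m,x_l)\to 0$, so $(x_n)$ is $G$-Cauchy; $G$-completeness gives a limit $u \in X$.

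For existence of a common fixed point I would argue as in Claim~2 of Theorem~\ref{common1}: if some $T_{n_0}u=u$, Claim~1 (which is a purely algebraic consequence of \eqref{conditionfin} with $F$ stripped off via $F^{-1}(0)=\{0\}$, or re-proved verbatim in the $F$-setting) propagates the fixed point to all $T_m$. Otherwise, applying \eqref{conditionfin} to $(x_n, T_k u, T_l u)$, letting $n\to\infty$, and using (G3) together with $(\tensor*[_{k}]{\Theta}{_{i,j}})^s+(\tensor*[_{k}]{\Lambda}{_{i,j}})^s<1$ produces the contradiction $G(u,T_ku,T_lu) < G(u,T_ku,T_lu)$ unless $u = T_k u = T_l u$ for all $k,l$. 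Uniqueness follows from \eqref{conditionfin} applied to two common fixed points $u,v$: the $\Theta$-terms vanish and one is left with $G(u,v,v) \leq \bigl((\tensor*[_{l}]{\Delta}{_{n,k}})^s + 3(\tensor*[_{l}]{\Lambda}{_{n,k}})^s\bigr) G(u,v,v) < G(u,v,v)$, forcing $u=v$.

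The main obstacle I anticipate is purely bookkeeping rather than conceptual: keeping the symmetrizations under (G3) and (G4) consistent so that the coefficient that survives the absorption step is precisely $r_n = \frac{(\tensor*[_{i+2}]{\Delta}{_{i,i+1}})^s+2(\tensor*[_{i+2}]{\Theta}{_{i,i+1}})^s+3(\tensor*[_{i+2}]{\Lambda}{_{i,i+1}})^s}{1-(\tensor*[_{i+2}]{\Theta}{_{i,i+1}})^s-(\tensor*[_{i+2}]{\Lambda}{_{i,i+1}})^s}$, and making sure the interaction between the $K^s$ factor from (G5) and the product of the $r_i$'s does not spoil the $\lambda$-sequence estimate — here one simply carries $K^s$ as a harmless multiplicative constant, exactly as in Theorem~\ref{lambda1}. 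No genuinely new idea beyond the superposition of the two earlier proofs is needed.
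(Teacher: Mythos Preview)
Your proposal is correct and is exactly the approach the paper indicates: the paper does not give a detailed proof of Theorem~\ref{lambdafin} but states that it is ``straightforward, following the steps of the proofs of Theorem~\ref{lambda1} and Theorem~\ref{common1}'', which is precisely the superposition you outline (the $F$/homogeneity bookkeeping of Theorem~\ref{lambda1} laid over the three-term recursion and Claim~1/Claim~2 structure of Theorem~\ref{common1}). Your handling of the $K^s$ factor and the AM--GM/$\lambda$-sequence estimate matches the paper's earlier arguments verbatim.
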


\bibliographystyle{amsplain}

\begin{thebibliography}{99}

\bibitem{Gaba1} Y. U. Gaba; \textit{$\lambda$-sequences and fixed point theorems in $G$-metric type spaces}, Journal of Nigerian Mathematical Society, Vol. 35, pp. 303-311, 2016.

\bibitem{Gaba3} Y. U. Gaba; \textit{Startpoints and $(\alpha,\gamma)$-Contractions in Quasi-Pseudometric Spaces}, Journal of Mathematics Volume 2014 (2014), Article ID 709253, 8 pages.

\bibitem{Gaba4} Y. U. Gaba; \textit{Metric type spaces and $\lambda$-sequences.}, Quaestiones Mathematicae, in press.



\bibitem{Gaba5} Y. U. Gaba; \textit{New Contractive Conditions for Maps in G-metric Type Spaces}, Advances in Analysis, Vol. 1, No. 2, October 2016. 


\bibitem{Gaba6} Y. U. Gaba; \textit{Common fixed points in $G$-metric type spaces via $\lambda$-sequences}, in preparation.

\bibitem{j}M. Jleli and B. Samet; \textit{Remarks on G-metric spaces and fixed point theorems}, Fixed Point Theory and Applications 2012 2012:210.


\bibitem{r}M. Jovanovi\'c, Z. Kadelburg, and S. Radenovi\'c ;
\textit{Common Fixed Point Results in
Metric-Type Spaces}' Fixed Point Theory and Applications
Volume 2010, Article ID 978121, 15 pages.


\bibitem{Gaba2} E. F. Kazeem, C. A. Agyingi, and Y. U. Gaba;
\textit{Chinese Journal of Mathematics},
Volume 2014 (2014), Article ID 198685, 7 pages.


\bibitem{kham}M. A. Khamsi and N. Hussain; \textit{KKM mappings in metric type spaces}, Nonlinear
Anal. 7 (9) (2010) 3123--3129.

\bibitem{s} S. K. Mohanta; 
\textit{Some Fixed Point Theorems in G-metric Spaces},
Analele \c{S}t.  Univ. Ovidius Constan\c{t}a, Vol. 20(1), 2012, 285--306

\bibitem{mustafa1} Z. Mustafa;  \textit{A new structure for generalized metric spaces with applications to fixed point
theory}, Ph.D. thesis, The University of Newcastle, Australia (2005).


\bibitem{Mustafa} Z. Mustafa and B. Sims; \textit{A new approach to generalized metric spaces}, Journal of Nonlinear Convex Analysis, 7 (2006), 289--297.


\bibitem{mustafa3} Z. Mustafa, H. Obiedat, and F. Awawdeh; \textit{Fixed Point Theorem for Expansive Mappings
in G-Metric Spaces}, Int. J. Contemp. Math. Sciences, Vol. 5, 2010, no. 50, 2463 - 2472.


\bibitem{mustafa4} Z. Mustafa, H. Obiedat, and F. Awawdeh; \textit{Some fixed point theorem for mappings on a complete $G$- metric space}, Fixed Point Theory and Applications Volume 2008, Article ID 189870, 12 pages.

\bibitem{mustafa2} Z. Mustafa, W. Shatanawi, and M. Bataineh;
\textit{Existence of Fixed Point Results in G-Metric Spaces}, Int.J. Math. Math. Sci.,Volume (2009), Article
ID 283028,10 pages,




\bibitem{p} S. R. Patil; \textit{Expansion Mapping Theorems in
G-cone Metric Spaces}, Int. Journal of Math. Analysis, Vol. 6, 2012, no. 44, 2147 - 2158.



\bibitem{v} V. Sihag, R. K. Vats and C. Vetro; \textit{A fixed point theorem in G-metric spaces via $\alpha$-series}, Quaestiones Mathematicae Vol. 37 , Iss. 3,Pages 429-434, 2014.


\bibitem{t} T. Van An, N. Van Dung, V. T. Hang; \textit{A new approach to fixed point theorems on G-metric spaces }
Topology and its Applications, Volume 160, Issue 12, 1 August 2013, Pages 1486-1493



\end{thebibliography}

\end{document}